\documentclass[11pt]{amsart}

\usepackage{amsmath, amssymb, amsthm}
\usepackage{bm}
\usepackage[long,us]{datetime}
\usepackage{enumerate}
\usepackage{fullpage}
\usepackage{graphicx}
\usepackage{tikz}
\usetikzlibrary{positioning,shapes,matrix,arrows}

\usepackage[pdftex,bookmarks,colorlinks,breaklinks]{hyperref}  
\usepackage{color}
\definecolor{dullmagenta}{rgb}{0.4,0,0.4}   
\definecolor{darkblue}{rgb}{0,0,0.4}
\hypersetup{linkcolor=red,citecolor=blue,filecolor=dullmagenta,urlcolor=darkblue} 


\newcommand{\inv}{^{-1}}

\DeclareMathOperator{\supp}{supp}
\newcommand{\weakly}{\rightharpoonup}



\newcommand{\DDD}{D}

\newcommand{\GGG}{G}
\newcommand{\NN}{\mathbb{N}}

\newcommand{\LLL}{\mathcal{L}}

\newcommand{\PPP}{\mathcal{P}}

\newcommand{\RR}{\mathbb{R}}
\newcommand{\ZZ}{\mathbb{Z}}
\newcommand{\veps}{\varepsilon}
\newcommand{\ie}{{\it i.e.}}
\newcommand{\eg}{{\it e.g.}}


\newcommand{\TL}{TL}


\newtheorem{thm}{Theorem}
\newtheorem*{thm*}{Theorem}

\newtheorem{cor}{Corollary}
\theoremstyle{remark}
\newtheorem{lem}{Lemma}
\newtheorem{prop}{Proposition}
\newtheorem{rmk}{Remark}
\newtheorem{defn}{Definition}


\usepackage[backend=bibtex,firstinits=true,style=alphabetic,natbib=true,url=false,sorting=nyt,doi=true,backref=false]{biblatex}
\addbibresource{../refs.bib}
\renewbibmacro{in:}{\ifentrytype{article}{}{\printtext{\bibstring{in}\intitlepunct}}}

\begin{document}

\title{Consistency of Dirichlet Partitions}
\date{\today}

 \author{Braxton Osting}
 \address{Department of Mathematics, 
 University of Utah, 
 Salt Lake City, UT 84112, USA}
 \email{osting@math.utah.edu}
 \thanks{B. Osting and T.~H.~Reeb are partially supported by NSF  DMS-1461138 and NSF DMS 16-19755.}

 \author{Todd Harry Reeb}
 \address{Department of Mathematics, 
 University of Utah, 
 Salt Lake City, UT 84112, USA}
 \email{reeb@math.utah.edu}


\subjclass[2010]{62H30, 
62G20, 
49J55, 
68R10, 
60D05. 
}

 \keywords{Dirichlet partition, graph partition, statistical consistency, graph Laplacian, point cloud, discrete to continuum limit, Gamma-convergence, random geometric graph}

\begin{abstract}
  A Dirichlet $k$-partition of a domain $U \subseteq \RR^d$ is a collection of
  $k$ pairwise disjoint open subsets such that the sum of their first
  Laplace-Dirichlet eigenvalues is minimal. A discrete version of Dirichlet
  partitions has been posed on graphs with applications in data analysis. Both
  versions admit variational formulations: solutions are characterized by
  minimizers of the Dirichlet energy of mappings from $U$ into a singular space
  $\Sigma_k \subseteq \RR^k$. In this paper, we extend results of N.\ Garc\'ia
  Trillos and D.\ Slep\v{c}ev to show that there exist solutions of the
  continuum problem arising as limits to solutions of a sequence of discrete
  problems. Specifically, a sequence of points $\{x_i\}_{i \in \NN}$ from $U$
  is sampled i.i.d.\ with respect to a given probability measure $\nu$ on $U$
  and for all $n \in \NN$, a geometric graph $\GGG_n$ is constructed from the
  first $n$ points $x_1, x_2, \ldots, x_n$ and the pairwise distances between
  the points. With probability one with respect to the choice of points
  $\{x_i\}_{i \in \NN}$, we show that as $n \to \infty$ the discrete Dirichlet
  energies for functions $\GGG_n \to \Sigma_k$ $\Gamma$-converge to (a scalar
  multiple of) the continuum Dirichlet energy for functions $U \to \Sigma_k$
  with respect to a metric coming from the theory of optimal transport. This,
  along with a compactness property for the aforementioned energies that we
  prove, implies the convergence of minimizers. When $\nu$ is the uniform
  distribution, our results also imply the statistical consistency statement
  that Dirichlet partitions of geometric graphs converge to partitions of the
  sampled space in the Hausdorff sense.
\end{abstract}

\maketitle

\section{Introduction} \label{sec:intro} The problem of identifying meaningful
groups (``clusters'') within a dataset arises frequently in unsupervised
learning problems, including community detection in sociological networks,
topic modeling, and image segmentation
\cite{jain1999data,xu2005survey,schaeffer2007graph,fortunato2010community,yang2015defining}.
One approach to the clustering problem is to construct a weighted graph,
$\GGG = (V,W)$, where the vertices, $V$, represent the items to be clustered
and a similarity between items is used to define weights, $W$, on the
edges. There is considerable freedom in choosing the weight function and there
exists a large class of methods that can then be used to partition the
resulting graph, \eg, spectral clustering
\cite{Luxburg:2007,ng2002spectral,shi2000normalized,garcia2016spectral},
Dirichlet partitioning \cite{osting2013minimal,ICDM2015,Zosso2015}, and methods
based on minimizing graph cuts (\eg, graph perimeter, Cheeger constant, ratio
cut, balanced cut, normalized cut, {\it etc}\ldots.)
\cite{arora2009expander,Bertozzi2012,vangennip2013mean,bresson2013multiclass,garcia2016cheeger}.

This exploratory approach to clustering can be motivated by the following
statistical model.  Let $U \subseteq \RR^d$ with $d\geq 2$ be a bounded, open
set with Lipschitz boundary and $\nu$ a Borel probability measure with
continuous density $\rho$.  As illustrated in Figure \ref{fig:cartoon}, we
consider a data collection process where we uniformly sample $n$ points,
$x_1, x_2, \ldots, x_n$, from $U$ and construct a geometric graph,
$\GGG_n=(V_n,W^{(n)})$ with $n$ vertices corresponding to $\{x_i\}_{i=1}^n$ and
edge weights $W^{(n)}_{ij}$ for $i,j\in V_n$ that are prescribed functions of
the distances $d(x_i, x_j)$.  We then consider continuum and discrete
partitioning problems on $(U,\nu)$ and $\GGG_n$. In this context, we say that a
partitioning method is \emph{consistent} if the optimal partitions for $\GGG_n$
converge (in the appropriate sense) to the optimal partitions for $(U,\nu)$ in
the large sample limit, $n \to \infty$.  There are several ingredients for a
statistical consistency statement:
\begin{enumerate}
\item[(a)] the continuum and discrete partitioning methods, 
\item[(b)] the construction of the weighted graphs, $\GGG_n$, and 
\item[(c)] the method of comparison between the discrete and continuum partitions. 
\end{enumerate}
An  important consequence for applications is that the partitions obtained using a consistent method will asymptotically stabilize and so the collection of more data will yield diminishing returns. A variety of consistency results have been proven, which we briefly survey in Section \ref{sec:pr}. 

In this paper,  we prove a consistency statement for Dirichlet partitions, which arise in the study of Bose-Einstein condensates 
\cite{bao2004ground,bao2004computing,chang2004segregated} 
and models for interacting agents \cite{conti2002nehari,conti2003optimal,chang2004segregated,cybulski2005,cybulski2008}.  The method of comparison between discrete and continuum partitions used here depends on a metric defined using optimal transport theory, as developed by  Garc\'ia Trillos and Slep\v{c}ev \cite{garcia2014empirical,garcia2016spectral}. This analysis yields practical information about how the graph weights can be constructed and suggests subsampling strategies for extremely large datasets.  

\begin{figure}[t!]
\begin{tikzpicture}[scale=0.9,every node/.style={scale=0.9},thick,>=stealth',dot/.style = {fill = black,circle,inner sep = 0pt,minimum size = 4pt}]
\node (a) at (-0,4) {\includegraphics[width=5cm]{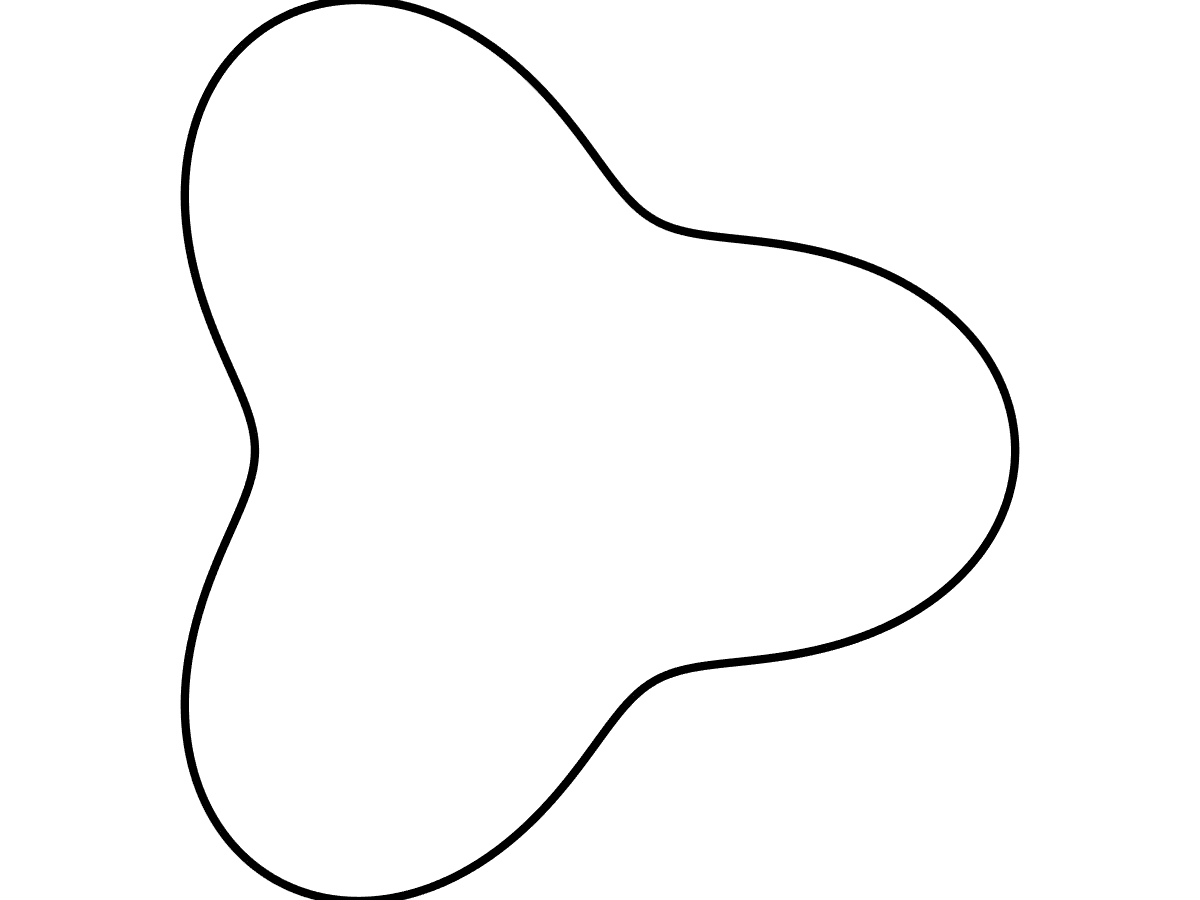}};
\node (b) at (7,4) {\includegraphics[width=5cm]{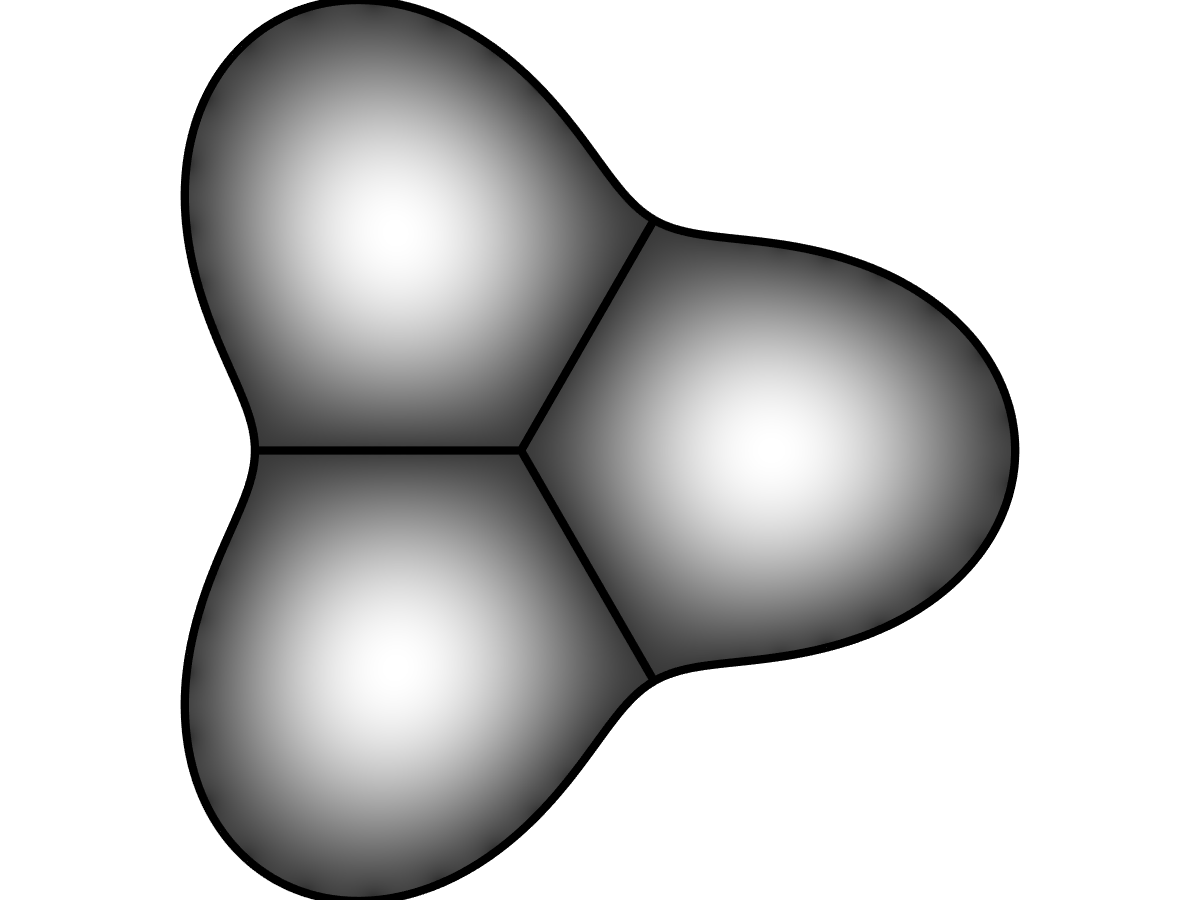}};
\node (c) at (0,-1.7) {\includegraphics[width=6cm]{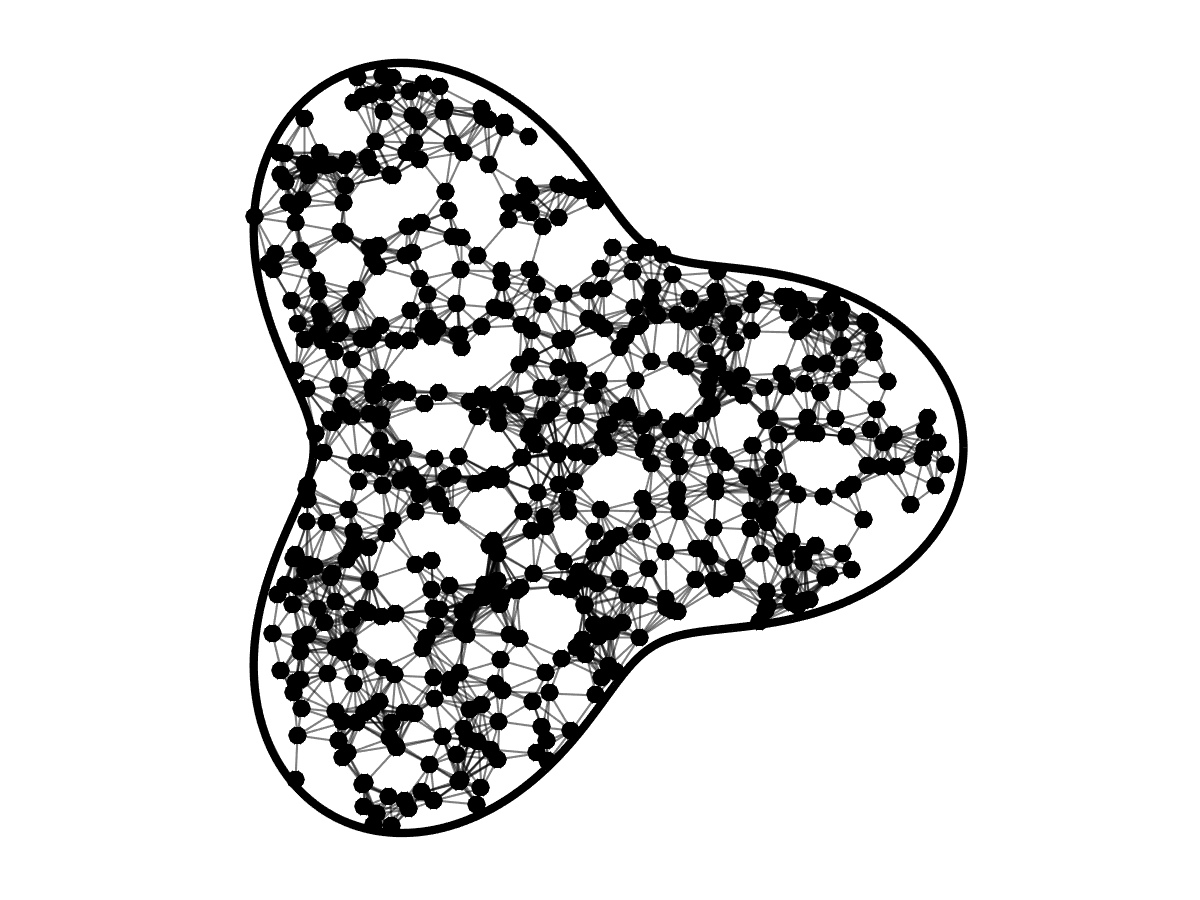}};
\node (d) at (7,-1.7) {\includegraphics[width=6cm]{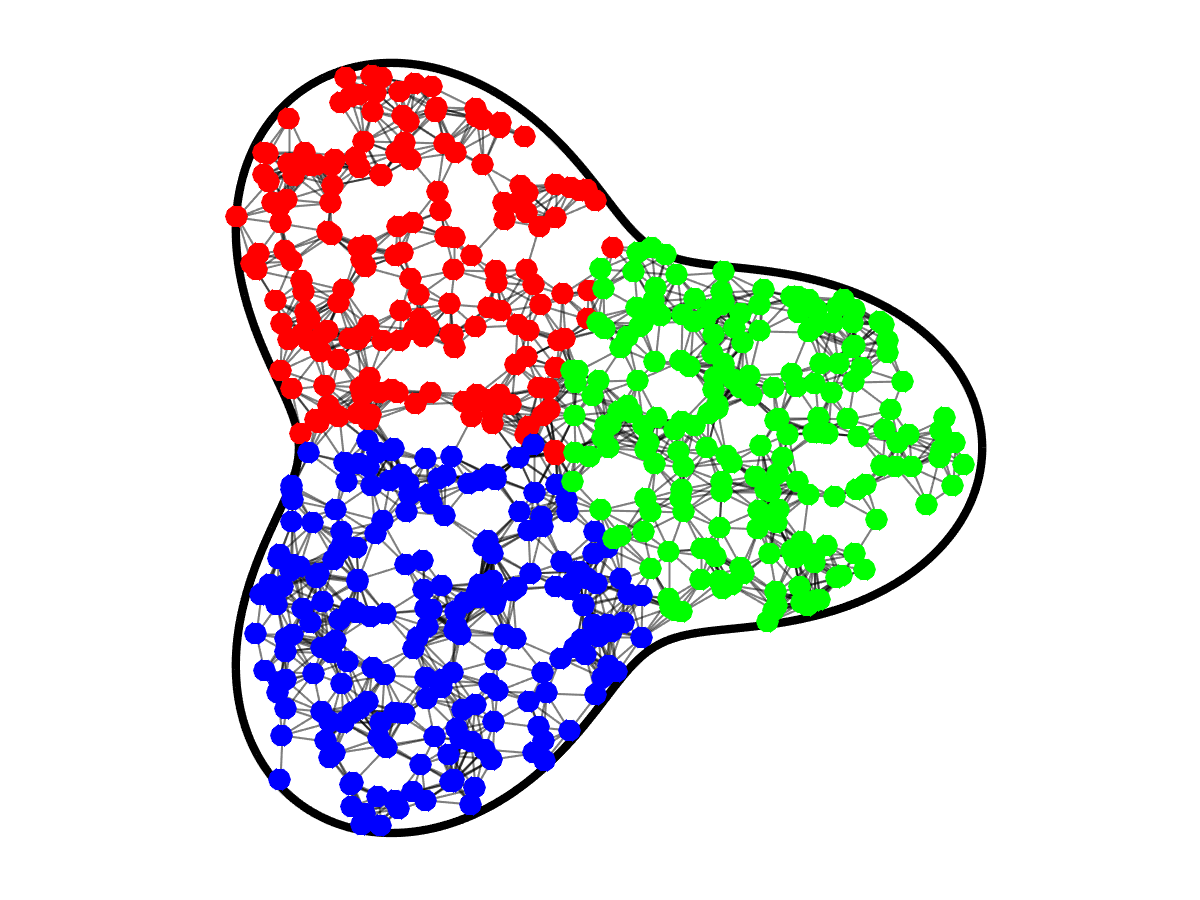}};
\draw (0,3.5) node[label={$(U,\nu)$}](triLat){};
\draw [->]  (2,4) to node [] {\begin{tabular}{c} continuum \\ partition \end{tabular}} (4.8,4);
\draw [->]  (2,-1.7) to node [] {\begin{tabular}{c} discrete \\ partition \end{tabular}} (4.8,-1.7);
\draw [->]  (0,2) to node [left] { \begin{tabular}{c} sample and \\ construct graph \end{tabular}} (0,.5);
\draw [<->]  (7,2) to node [right] { compare} (7,.5); 
\end{tikzpicture}
\caption{Illustration of consistency for the partitioning problem. }
\label{fig:cartoon}
\end{figure}  
  
\subsection{Continuum Dirichlet partitions}
Let $U \subseteq \RR^d$ with $d \geq 2$ be an open bounded domain with
Lipschitz boundary. Let $\rho\colon U \to \mathbb R $ be a continuous function
such that there exist constants $M>m>0$ with $m \leq \rho(x) \leq M$ for all
$x \in U$.  The \emph{weighted Dirichlet $k$-partition problem} for
$U \subseteq \RR^d$ is to choose a $k$-partition, \ie, $k$ disjoint quasi-open sets
$U_1, U_2, \ldots, U_k \subseteq U$, that minimize
\begin{equation} \label{eq:ContDirPart}
\sum_{\ell=1}^k \lambda_1(U_\ell)
\end{equation} 
where 
\begin{equation}
  \lambda_1(U) := \min_{\substack{u \in H^1_0(U, \rho)\\ \|u\|_{L^2(U,\rho)}=1}} E(u) 
  \qquad \textrm{and} \qquad 
  E(u) := \begin{cases} 
    \int_U |\nabla u|^2\ \rho^2(x) dx & u \in H^1_0(U, \rho)\\ 
    \infty & \text{ otherwise}
\end{cases}. 
\end{equation}
Here, $E$ is a weighted Dirichlet energy and $\lambda_1(U)$ is the first
Dirichlet eigenvalue of the weighted Laplacian,
$\LLL\colon u \mapsto -\frac{1}{\rho}\text{div}(\rho^2\nabla u)$, on $U$ with
Dirichlet boundary conditions.  The $\rho$-weighted $L^2$-norm is defined
$\|u\|_{L^2(U,\rho)} := \left( \int_U u^2(x)\rho(x)\ dx \right)^{\frac{1}{2}}$.
We refer to any minimizing $k$-partition as a \emph{Dirichlet $k$-partition} of
$(U,\rho)$, or simply a \emph{Dirichlet partition} when $k$ and $(U,\rho)$ are
understood.  Observe that by the monotonicity of Dirichlet eigenvalues, any
Dirichlet partition $\amalg_i U_i$ satisfies
$\overline{U} = \cup_{i=1}^k \overline{U_i}$, which justifies the use of the word ``partition'' in the name. Typically this problem is
considered for $\rho \equiv |\Omega|^{-1}$, in which case $\LLL$ is the Laplacian,
$-\Delta$.  In this setting, the existence of optimal partitions in the class
of quasi-open sets was proved in \cite{bucur1998existence} and, subsequently,
several papers have investigated the regularity of partitions, properties of
optimal partitions, the asymptotic behavior of optimal partitions as
$k \to \infty$, and computational methods
\cite{caffarelli2007optimal,helffer2010remarks,Bonnaillie2007,bourdin2010optimal,Helffer2010b,Helffer2010,Bucur2013,Ramos:2015aa,bogosel2016method}.
        
\bigskip

The Dirichlet partition problem for $U$ is equivalent to the mapping problem
\begin{equation}\label{eq:MapProb}
  \min
  \left\{\mathbf{E}(\mathbf{u}) \colon \mathbf{u} = (u_1, u_2, \ldots, u_k) \in
    H^1_0(U, \rho; \Sigma_k), \int_U u_\ell^2(x) \rho(x) \ dx = 1 \text{ for all } \ell \in [k]\right\}, 
 \end{equation}
where 
\begin{equation} \label{eq:ContDirEnergy}
\mathbf{E}(\mathbf{u}) :=
 \sum_{\ell=1}^k\int_{U}|\nabla u_\ell|^2\rho^2(x)\ dx
 \end{equation} 
 is the (weighted) Dirichlet energy of $\mathbf{u}$,
 $\Sigma_k := \left\{x \in \RR^k \colon \sum_{i \neq j}^k x_i^2x_j^2 = 0
 \right\}$ is the singular space given by the coordinate axes, and
 $ H^1_0(U, \rho; \Sigma_k) = \{\mathbf{u} \in H^1_0(U, \rho; \RR^k) \colon
 \mathbf{u}(x) \in \Sigma_k \text{ a.e.}\} $. We refer to a solution of
 \eqref{eq:MapProb} as a \emph{ground state of $(U,\rho)$}, which, without loss
 of generality, we may assume to be nonnegative.
      
 In particular, if $\mathbf{u}$ is a quasi-continuous representative of a
 ground state such that each component function $u_i$ assumes only nonnegative
 values, then a Dirichlet partition $U = \amalg_i U_i$ is given by
 $U_i = u_i\inv(0,\infty)$ for $i=1,\ldots, k$. Likewise, the first Dirichlet
 eigenvectors $u_i$ of a Dirichlet partition $\amalg_i U_i$ may be assembled
 into a function $\mathbf{u} \in H^1_0(U, \rho;\Sigma_k)$ that solves the mapping
 problem \eqref{eq:MapProb}. These results due to Caffarelli and Lin \cite{caffarelli2007optimal}.
 They used this reformulation to prove regularity results for the case
 $\rho \equiv |\Omega|^{-1}$, such as the locally Lipschitz continuity of
 $\mathbf{u}$ and the $C^{2,\alpha}$-smoothness ($0 < \alpha < 1$) of the
 partition interfaces away from a set of co-dimension two. In particular, for $\rho \equiv |\Omega|^{-1}$, this implies that the Dirichlet partition consists of open sets.   We will use the continuity of $\mathbf{u}$ in this case to establish the Hausdorff convergence of partitions.
 
\subsection{Dirichlet partitions for weighted graphs}
A discrete analogue of the Dirichlet $k$-partition problem has been proposed as
a scheme for clustering data and image segmentation
\cite{osting2013minimal,ICDM2015,Zosso2015}.  We consider a weighted graph $\GGG = (V, W)$ with
vertices $V = \{x_i\}_{i=1}^n$ and symmetric edge weights $W\in \mathbb
R^{n\times n}$, \ie, $W_{ij}$ is the weight of the edge connecting vertices $i$
and $j$ ($i = j$ possibly). The weighted Dirichlet energy of a function $u\colon V \to \RR$ is 
\[E(u) :=\sum_{i,j = 1}^n W_{ij}(u(x_i) - u(x_j))^2.
\] 
The Dirichlet energy of a nonempty
subset $S \subseteq V$ is defined 
\[\lambda_1(S) := \min_{\substack{u|_{S^c} = 0\\   \|u\| = 1}} E(u), \] 
where $u = (u_1, u_2, \ldots, u_n)$ is a function\footnote{We identify the
  space of functions $V\to \mathbb R$ with $\RR^n$, and as in the continuum
  case, the weighted Dirichlet energy is a quadratic functional on the
  appropriate function space.} $V \to \RR$ and $\|u\|^2 = \sum_{i=1}^n u_i^2 $;
as in the continuum case, $\lambda_1(\varnothing) = \infty$.
Defining the degree matrix $D = \mathrm{diag}(d)$, where $d_i = \sum_{j = 1}^n
W_{i,j}$, we have that $\lambda_1$ is the first eigenvalue of the $S\times S$
principal submatrix of the un-normalized graph Laplacian, $D-W$.  For fixed
$k$, the discrete Dirichlet $k$-partition problem on $\GGG$ is then to choose
disjoint subsets $V_1, V_2, \ldots, V_k \subseteq V$ minimizing
$\sum_{\ell=1}^k \lambda_1(V_\ell)$. As with the continuous problem, the
monotonicity of the Dirichlet energy of a subset ensures that a Dirichlet
$k$-partition of $\GGG$ is indeed a partition of $V$ into $k$ disjoint
subsets. In this paper, we will consider Dircichlet partitions for certain
geometric graphs.

\subsection{Construction of geometric graphs}
Let $U \subseteq \RR^d$ with $d \geq 2$ be an open bounded domain with
Lipschitz boundary.  Let $\Omega \supsetneq U$ be a domain in $\RR^d$
satisfying the same properties and that also compactly contains $U$.  We refer
to $\Omega$ as an \emph{auxiliary domain} and require it so that Dirichlet
boundary conditions can be enforced in the discrete problem.  We assume that
$\Omega$ is endowed with the Borel probability measure $\nu$ with density
$\rho$ satisfying the aforementioned conditions.

\begin{rmk} Our setting differs slightly from that of \cite{garcia2016spectral}
  in that the domain of interest is $U$, but we need an auxiliary domain
  $\Omega \supsetneq U$ to enforce Dirichlet boundary conditions for the
  discrete problem. In Section~\ref{sec:Zaremba}, we will consider a modified
  partition model that doesn't require an auxillary domain.
\end{rmk}

As in \cite{garcia2016spectral}, we form a sequence of weighted geometric
graphs $\GGG_n = (V_n, W^{(n)})$ from the first $n$ points
$V_n = \{x_1, x_2, \ldots, x_n\}$ of a sequence of random points
$\{x_n\}_{n \in \NN}$ of $\Omega$ sampled uniformly and independently.  The edge
 incident to vertices  $x_i$ and $x_j$ ($i = j$ possibly) has weight
\[W_{ij}^{(n)} = \eta_{\veps_n}(x_i - x_j) =
  \frac{1}{\veps_n^d}\eta\left(\frac{x_i - x_j}{\veps_n}\right)\] where
$\eta\colon \RR^d \to[0,\infty)$ and $\epsilon_n >0$. We will assume that
$\eta$ is a similarity kernel and $\{\veps_n\}_{n \in \NN}$ is an admissible
sequence, defined as follows.
    
\begin{defn}[\cite{garcia2016spectral}] \label{def:SimKer} We say $\eta\colon
  \RR^d \to [0,\infty)$ is a \emph{similarity kernel}, if there exists a
  profile $\bm{\eta}\colon [0, \infty) \to [0, \infty)$, \ie, $\eta(x) =
  \bm{\eta}(\|x\|)$ for all $x \in \RR^d$, that satisfies the properties
\begin{enumerate}
\item[($\eta$1)] $\bm{\eta}(0) > 0$ and $\bm{\eta}$ is continuous at $0$, 
\item[($\eta$2)] $\bm{\eta}$ is non-increasing, and 
\item[($\eta$3)] $\eta$ has finite  \emph{surface tension}, $\sigma_\eta$, defined 
\begin{equation} \label{eq:ST}
\sigma_\eta :=  \int_{\RR^d}\eta(h)|h_1|^2\ dh. 
\end{equation}
\end{enumerate}
\end{defn}

\begin{defn}[\cite{garcia2016spectral}] \label{def:AdSeq} We say a sequence of
  positive numbers $\{\veps_n\}_{n \in \NN}$ is \emph{admissible} if
  $\epsilon_n \to 0$ and
\[ 
\lim_{n \to \infty} \frac{(\log n)^{C_d}}{n^{1/d}}\frac{1}{\veps_n} = 0 
\qquad \mathrm{where} \quad 
C_d = 
\begin{cases} 
3/4 & d = 2 \\ 
1/d & \mathrm{otherwise} 
\end{cases}.
\]
\end{defn}

\subsection{Dirichlet partitions of geometric graphs} 
For a geometric graph, $G_n = (V_n,W^{(n)})$, as constructed  above, we define the class of $L^2$ vertex functions which vanishes on $\Omega \setminus U$, 
\[
L^2_U(V_n) := \{ u\colon V_n \to \RR \colon u(x_i) = 0 \text{ if } x_i \in \Omega \setminus U\}.
\] 
The Dirichlet energy of a subset $S \subseteq V_n$ is defined to be 
\begin{equation} \label{eq:DiscEnergyGeoGraph}
  \lambda_1(S) := \min_{\substack{ u \in L^2_U(V_n) \\ u|_{S^c} = 0\\
      \|u\|_{\nu_n} = 1}} E(u), 
    \end{equation}
where $u = (u_1, u_2, \ldots, u_n)$ is a function $V \to \RR$ and $\| u \|_{\nu_{n}} = \sqrt{ \frac{1}{n} \sum_{i=1}^n u(x_i)^2 }$ is a weighted variant of the $L^2$-norm (associated with the empirical distribution $\nu_n$ on the first $n$ points). We consider a weighted
variant of the discrete Dirichlet partition problem: choose a partition $V_1
\sqcup V_2 \sqcup \ldots \sqcup V_k$ by disjoint sets to minimize 
\begin{equation} \label{eq:PartGeoGraph}
\sum_{\ell=1}^k\lambda_1(V_\ell), 
\end{equation}
where $\lambda_1(S)$ is defined as in
\eqref{eq:DiscEnergyGeoGraph}.

The discrete Dirichlet partition problem \eqref{eq:PartGeoGraph} has a
formulation analogous to the mapping formulation given in \eqref{eq:MapProb}.
We define
\begin{equation} \label{e:L2U}
L^2_U(V; \Sigma_k) := \{\mathbf{u}\colon V \to \RR^k \mid
\mathbf{u}(V) \subseteq \Sigma_k \text{ and } \mathbf{u}(x_i) = 0 \text{ if }
x_i \notin U\}
\end{equation}
and the \emph{weighted discrete Dirichlet energy} of $\mathbf{u} = (u_1, u_2, \ldots,
u_k) \in L^2_U(V; \Sigma_k)$ by 
\begin{equation} \label{eq:DiscDirEn}
\mathbf{E}_{n, \veps}(\mathbf{u}) := \frac{1}{n^2\veps^2} \sum_{\ell=1}^k E(u_\ell)
\end{equation}
Here, the multiplicative factor is included for the $\Gamma$-convergence of the discrete Dirichlet energy to the continuous energy. 
The discrete mapping problem formulation is then
\begin{equation} \label{eq:DiscMapProb} \min
  \left\{\mathbf{E}_{n,\veps}(\mathbf{u}) \colon \mathbf{u} = (u_1, u_2,
    \ldots, u_k) \in L^2_U(V; \Sigma_k),\ \|u_\ell\|_{\nu_n} = 1 \text{ for all
    } \ell \in [k] \right\},
\end{equation} 
which is clearly equivalent to the discrete Dirichlet partition problem
\eqref{eq:PartGeoGraph}. Again, we refer to minimizers of
\eqref{eq:DiscMapProb} as \emph{ground states} of the graph $G_n$ and, without
loss of generality, assume that $\mathbf{u}$ has nonnegative components.

\subsection{Statement of results} Our main result is the following Theorem,
which states that the discrete Dirichlet energy \eqref{eq:DiscDirEn} for
geometric graphs $\Gamma$-converges to (a constant multiplicative factor of)
the continuum Dirichlet energy \eqref{eq:ContDirEnergy}. The metric used in the
$\Gamma$-convergence is the $TL^2$ metric, which will be defined in Section
\ref{sec:OptTransport}.

\begin{thm}[$\Gamma$-convergence of Dirichlet energies] \label{thm:GconvDE} Let
  $\{\veps_n\}_{n \in \NN}$ be an admissible sequence. Suppose that
  $\{x_n\}_{n \in \NN}$ are sampled i.i.d.\ from $(\Omega,\nu,\rho)$, with
  which we create a sequence of geometric graphs using a similarity kernel,
  $\eta$ as described above. Then with probability one, as $n \to \infty$, the
  sequence of weighted discrete Dirichlet energies
  $\{\mathbf{E}_{n, \veps_n}\}_{n \in \NN}$ $\Gamma$-converges to
  $\sigma_\eta\mathbf{E}$ in the $\TL^2$-sense. Moreover, the compactness
  property also holds for $\{\mathbf{E}_{n, \veps_n}\}_{n \in \NN}$ with
  respect to the $\TL^2$-metric, \ie, every sequence
  $\{\mathbf{u}_n\}_{n \in \NN}$ such that
  $\mathbf{u}_n \in L^2_U(V_n;\Sigma_k)$ with
    \[\sup_{n \in \NN}\|\mathbf{u}_n\|_{\nu_{n}} < \infty 
    \qquad \text{and} \qquad \sup_{n \in \NN} \mathbf{E}_{n,
      \veps_n}(\mathbf{u}_n) < \infty
  \] is precompact in $\TL^2$.
\end{thm}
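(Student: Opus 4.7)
The plan is to reduce the vector-valued $\Gamma$-convergence to the scalar result of García Trillos--Slepčev~\cite{garcia2016spectral} by exploiting the sum-of-components structure of both $\mathbf{E}_{n,\veps_n}$ and $\mathbf{E}$, and to treat the singular constraint $\mathbf{u}(x)\in\Sigma_k$ and the Dirichlet constraint (vanishing on $\Omega\setminus U$) separately. Recall that García Trillos--Slepčev establish, with probability one, that for scalar functions $v_n\colon V_n\to\RR$ the energies $\frac{1}{n^2\veps_n^2} E(v_n)$ $\Gamma$-converge in $\TL^2$ to $\sigma_\eta \int_U |\nabla v|^2 \rho^2 \, dx$ and satisfy the corresponding compactness property. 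Since $\mathbf{E}_{n,\veps_n}(\mathbf{u}_n)=\sum_\ell \frac{1}{n^2\veps_n^2} E(u_{n,\ell})$ and $\mathbf{E}(\mathbf{u}) = \sum_\ell \int_U |\nabla u_\ell|^2 \rho^2\, dx$, the scalar results apply componentwise.

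For the liminf inequality and compactness: given $\mathbf{u}_n\in L^2_U(V_n;\Sigma_k)$ with $\mathbf{u}_n\to\mathbf{u}$ in $\TL^2$ (componentwise), the scalar liminf gives $\liminf_n \mathbf{E}_{n,\veps_n}(\mathbf{u}_n) \geq \sigma_\eta \mathbf{E}(\mathbf{u})$. To identify $\mathbf{u}\in H^1_0(U,\rho;\Sigma_k)$, note first that the vanishing of each $u_{n,\ell}$ off the sampled points in $U$ yields $u_\ell = 0$ a.e.\ on $\Omega\setminus U$ after passing to a subsequence along which the $\TL^2$-transport plans induce pointwise a.e.\ convergence; second, the closedness of $\Sigma_k\subseteq\RR^k$ implies $\mathbf{u}(x)\in\Sigma_k$ a.e. Compactness is obtained by applying the scalar compactness to each coordinate sequence and extracting a common subsequence, then repeating the closedness argument for $\Sigma_k$.

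For the limsup inequality, take $\mathbf{u}\in H^1_0(U,\rho;\Sigma_k)$ with $\|u_\ell\|_{L^2(U,\rho)}=1$. The Caffarelli--Lin regularity cited in the introduction says that $\mathbf{u}$ admits a locally Lipschitz representative and that the associated partition $U_\ell = u_\ell^{-1}(0,\infty)$ has $C^{2,\alpha}$ interfaces away from a codimension-two set. On a slightly eroded partition $\{U_\ell^{(m)}\}$, each component has disjoint open support, and a standard mollification/cutoff gives a smooth approximant $\mathbf{u}^{(m)}$ with $\mathbf{E}(\mathbf{u}^{(m)})\to \mathbf{E}(\mathbf{u})$. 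Applying the scalar recovery of~\cite{garcia2016spectral} to each $u_\ell^{(m)}$ in turn produces $u_{n,\ell}^{(m)}$, and after further restricting each $u_{n,\ell}^{(m)}$ to sampled points lying in an $O(\veps_n)$-erosion of $U_\ell^{(m)}$ the discrete supports are pairwise disjoint, so the vector $\mathbf{u}_n^{(m)}\in L^2_U(V_n;\Sigma_k)$. A diagonal extraction $m=m(n)\to\infty$ sufficiently slowly, followed by a renormalization to enforce $\|u_{n,\ell}\|_{\nu_n}=1$, yields a recovery sequence.

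The main obstacle is exactly this recovery step: the off-the-shelf scalar $\TL^2$-recovery does not by itself preserve the singular constraint, since smoothing or nearest-point interpolation near a partition interface typically produces small overlaps between adjacent components. Reconciling the necessary erosion/cutoff with the pointwise Dirichlet normalization and with the quantitative energy estimate—so that the $O(\veps_n)$ boundary layer contributes negligibly to $\mathbf{E}_{n,\veps_n}(\mathbf{u}_n^{(m)})$ as $n\to\infty$ and $m\to\infty$—is where the core technical work lies; the continuity and interface regularity of Dirichlet partitions are essential here, as the $(d-1)$-dimensional interface controls the measure of the overlap region under $\nu_n$ via standard concentration estimates for empirical measures. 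The remaining items (liminf, compactness, $\Sigma_k$-closedness, Dirichlet condition) are, by contrast, fairly direct consequences of the scalar theory.
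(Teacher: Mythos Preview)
Your liminf and compactness arguments follow the paper closely: apply the scalar results of Garc\'ia Trillos--Slep\v{c}ev componentwise and use closedness of $\Sigma_k$. For the Dirichlet constraint on $\Omega\setminus U$ the paper uses a separate restriction lemma (multiplication by $\chi_{\Omega\setminus U}$ is $\TL^2$-continuous along convergent sequences) rather than extracting a pointwise-a.e.\ subsequence, but this is cosmetic.

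The limsup argument, however, contains a genuine gap and is over-engineered. The Caffarelli--Lin regularity you invoke applies only to \emph{minimizers} of the mapping problem, and even then only for constant $\rho$; it says nothing about an arbitrary $\mathbf{u}\in H^1_0(U,\rho;\Sigma_k)$, yet the $\Gamma$-limsup must be established for every such $\mathbf{u}$. So the erosion/interface programme, as you have justified it, cannot start. The normalization $\|u_\ell\|_{L^2(U,\rho)}=1$ and the final renormalization step are also extraneous: they belong to the constrained minimization problem, not to the $\Gamma$-convergence of the energies themselves.

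More importantly, you have misidentified the obstacle. In Garc\'ia Trillos--Slep\v{c}ev the scalar recovery sequence for a Lipschitz $v$ is simply the pointwise restriction $v_n=(v(x_1),\dots,v(x_n))$ to the sample points---there is no smoothing or nearest-point interpolation involved. Applied componentwise to a Lipschitz $\mathbf{u}\colon\Omega\to\Sigma_k$ vanishing on $\Omega\setminus U$, this gives $\mathbf{u}_n(x_i)=\mathbf{u}(x_i)\in\Sigma_k$ automatically, so the singular constraint is preserved for free and there is no overlap between components to repair. The paper therefore reduces, via density of Lipschitz functions in $H^1_0$ and a diagonalization, to this Lipschitz case and is done in a few lines; no interface regularity, boundary-layer estimates, or empirical-measure concentration enter the argument.
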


Because minimizers of $\sigma_\eta\mathbf{E}$ are
  minimizers of $\mathbf{E}$, and vice versa, Theorem~\ref{thm:GconvDE} implies
  that ground states of the graph $\GGG_n$, $\mathbf{u}_n$, converge in the
  $TL^2$ metric (along a subsequence) to a ground state $\mathbf{u}$ of
  $U$. The supports of the components of $\mathbf{u}_n$ and $\mathbf{u}$ define
  Dirichlet $k$-partitions of the graph and domain $U$, so a natural question
  to ask is in what sense  do these associated partitions converge. The
  following corollary shows that when $\nu$ is the uniform distribution on
  $\Omega$, \ie, $\rho \equiv |\Omega|^{-1}$, the associated Dirichlet $k$-partitions of $G_n$ converge to Dirichlet $k$-partitions of $U$ in the Hausdorff distance.

  \begin{cor}[Hausdorff convergence of Dirichlet
    $k$-partitions] \label{cor:Hconv} With the same assumptions as in Theorem
    \ref{thm:GconvDE} and also that $\rho \equiv |\Omega|^{-1}$, let
    $\{\mathbf{u}_n\}_{n \in \NN}$ be ground states of
    $\{ \GGG_n \}_{n\in \mathbb N}$ so that, after passing to a subsequence,
    $\mathbf{u}_n \overset{\TL^2}{\longrightarrow} \mathbf{u}$ where
    $\mathbf{u} \in H^1_0(U, \rho;\Sigma_k)$ is a continuous ground state of $U$.
    Let $U_{n,\ell} = u_{n,\ell}^{-1}(0,\infty)$ and
    $U_\ell = u_\ell^{-1}(0,\infty)$, so that $\amalg_\ell U_{n,\ell}$ and
    $ \amalg_\ell U_{\ell} $ are Dirichlet $k$-partitions of $G_n$ and $U$,
    respectively. Then $U_{n,\ell}$ converges along the same subsequence to
    $\overline{U_\ell}$ in the Hausdorff distance as $n \to \infty$ for all
    $ \ell \in [k]$.
\end{cor}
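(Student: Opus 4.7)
Since $\rho \equiv |\Omega|^{-1}$, I would first invoke the infinity-transport result of García Trillos--Slep\v{c}ev to obtain, almost surely, transport maps $T_n \colon \Omega \to V_n$ satisfying $T_n \# \nu = \nu_n$ and $\|T_n - \mathrm{id}\|_{L^\infty(\nu)} \to 0$; the $TL^2$-convergence $\mathbf{u}_n \to \mathbf{u}$ then rephrases as $\tilde{\mathbf{u}}_n := \mathbf{u}_n \circ T_n \to \mathbf{u}$ in $L^2(\Omega,\nu;\Sigma_k)$. The key structural facts I would use about the continuum ground state $\mathbf{u}$ are: each $u_\ell$ is continuous on $\overline{U}$ (Caffarelli--Lin), $u_\ell > 0$ precisely on the open set $U_\ell$ (strong maximum principle for first Dirichlet eigenfunctions), $u_\ell \equiv 0$ on $\partial U$, and the partition property $\overline{U} = \bigcup_\ell \overline{U_\ell}$.

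\textbf{The easy inclusion $\overline{U_\ell} \subseteq (U_{n,\ell})^\delta$.} Given $y \in U_\ell$, continuity produces $r < \delta/2$ and $c > 0$ with $u_\ell \geq c$ on $B(y,r) \subset U_\ell$. By Chebyshev and the local $L^2$-convergence $\tilde u_{n,\ell} \to u_\ell$ on $B(y,r)$, the $\nu$-measure of $\{z \in B(y,r) : \tilde u_{n,\ell}(z) < c/2\}$ tends to $0$, so for $n$ large some $z \in B(y,r)$ satisfies $u_{n,\ell}(T_n(z)) \geq c/2$, giving a vertex of $U_{n,\ell}$ within $r + \|T_n - \mathrm{id}\|_\infty < \delta$ of $y$. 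Boundary points $y \in \partial U_\ell$ are reached by approximation from $U_\ell$.

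\textbf{The hard inclusion $U_{n,\ell} \subseteq (\overline{U_\ell})^\delta$.} I would argue by contradiction: assume along a subsequence there is $x_{i_n} \in U_{n,\ell}$ with $d(x_{i_n}, \overline{U_\ell}) \geq \delta$, and (by compactness) $x_{i_n} \to y$ with $d(y, \overline{U_\ell}) \geq \delta$. If $y \notin \overline{U}$, then eventually $x_{i_n} \in \Omega \setminus U$, contradicting both $\mathbf{u}_n \in L^2_U(V_n;\Sigma_k)$ and $u_{n,\ell}(x_{i_n}) > 0$. Otherwise $y \in \overline{U_{\ell'}}$ for some $\ell' \neq \ell$ by the partition property. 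When $y$ lies in the interior $U_{\ell'}$, continuity yields $u_{\ell'} \geq c$ on $B(y,r)$; Chebyshev applied to $\tilde u_{n,\ell'} \to u_{\ell'}$ then shows all but a vanishingly small fraction of vertices in $V_n \cap B(y,r)$ lie in $U_{n,\ell'}$. The singular-space constraint at $x_{i_n}$ forces $u_{n,\ell'}(x_{i_n}) = 0$, making $x_{i_n}$ an exception; counting edges from $x_{i_n}$ to its $\sim n\veps_n^d$ graph-neighbors on which $u_{n,\ell'} \geq c/2$, I would show this exception costs $E(u_{n,\ell'})$ at least on the order of $n^2\veps_n^2$, and that reassigning $x_{i_n}$ from $V_{n,\ell}$ to $V_{n,\ell'}$ strictly decreases $\sum_\ell \lambda_1(V_{n,\ell})$, contradicting the ground-state property of the discrete partition.

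\textbf{Main obstacle.} The subtle case is $y \in \partial U_{\ell'}$, an interface point where every $u_{\ell''}$ vanishes and the local Chebyshev argument degenerates. Here I would invoke the Caffarelli--Lin regularity (locally Lipschitz $\mathbf{u}$, $C^{2,\alpha}$-smoothness of interfaces away from a codimension-2 set) to locate, at scale comparable to $\delta$, a nearby ball on which some $u_{\ell''}$ with $\ell'' \neq \ell$ is bounded below, reducing to the interior case above. Carrying out the swap/energy-comparison argument uniformly across such $y$ and quantifying it in terms of the admissible scaling $n\veps_n^d \to \infty$ is the principal technical work; everything else is bookkeeping and passage to subsequences.
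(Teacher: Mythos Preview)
Your ``easy inclusion'' is essentially the paper's first step, but your ``hard inclusion'' takes a much harder road than necessary, and the energy--swap argument you outline is not actually completed: you correctly flag the interface case as the main obstacle, and the quantitative comparison you need (that reassigning $x_{i_n}$ strictly lowers $\sum_\ell \lambda_1(V_{n,\ell})$, uniformly in the location of $y$) is not established. Even in the interior case, the swap requires controlling \emph{both} the gain in $\lambda_1(V_{n,\ell'}\cup\{x_{i_n}\})$ and the loss in $\lambda_1(V_{n,\ell}\setminus\{x_{i_n}\})$; the latter is not obviously small, since $u_{n,\ell}(x_{i_n})$ could carry a nontrivial share of the $L^2$-mass.

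The paper avoids all of this with a short structural argument. First pass to a subsequence along which $U_{n,\ell}\to V_\ell$ in Hausdorff \emph{for every} $\ell$ simultaneously (Blaschke selection). Then prove only the inclusion $\overline{U_\ell}\subseteq V_\ell$ for each $\ell$ --- this is your easy direction, done via the $L^2$-convergence and continuity of $\mathbf{u}$. The reverse inclusion now follows from disjointness alone: the discrete supports $\{U_{n,\ell}\}_\ell$ partition the sample points in $U$, so if some $V_m$ strictly overshoots $\overline{U_m}$ it must encroach on another $\overline{U_\ell}$, forcing $V_\ell\not\supseteq\overline{U_\ell}$ and contradicting the inclusion already proved. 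No optimality of $\mathbf{u}_n$, no energy comparison, and no interface regularity are used for this direction --- only that the discrete supports are disjoint and that the continuum partition covers $\overline{U}$.

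In short: your plan could perhaps be pushed through, but the paper's pigeonhole-from-the-easy-inclusion trick makes the hard inclusion a two-line consequence and renders the swap argument and the Caffarelli--Lin interface analysis unnecessary.
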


Observing that the supports of the components of the discrete and continuum ground states define the (closure) of the partitions, \ie 
$$
U_{n,\ell} = \supp(u_{n,\ell}) \quad \textrm{and} \quad \overline{U_\ell} = \supp(u_{\ell}), 
$$
we read Corollary \ref{cor:Hconv} to state that the supports of the components of the discrete ground states converge to the support of the continuum ground states in the Hausdorff sense.

\begin{figure}[t!]
\begin{center}
\includegraphics[width=5cm]{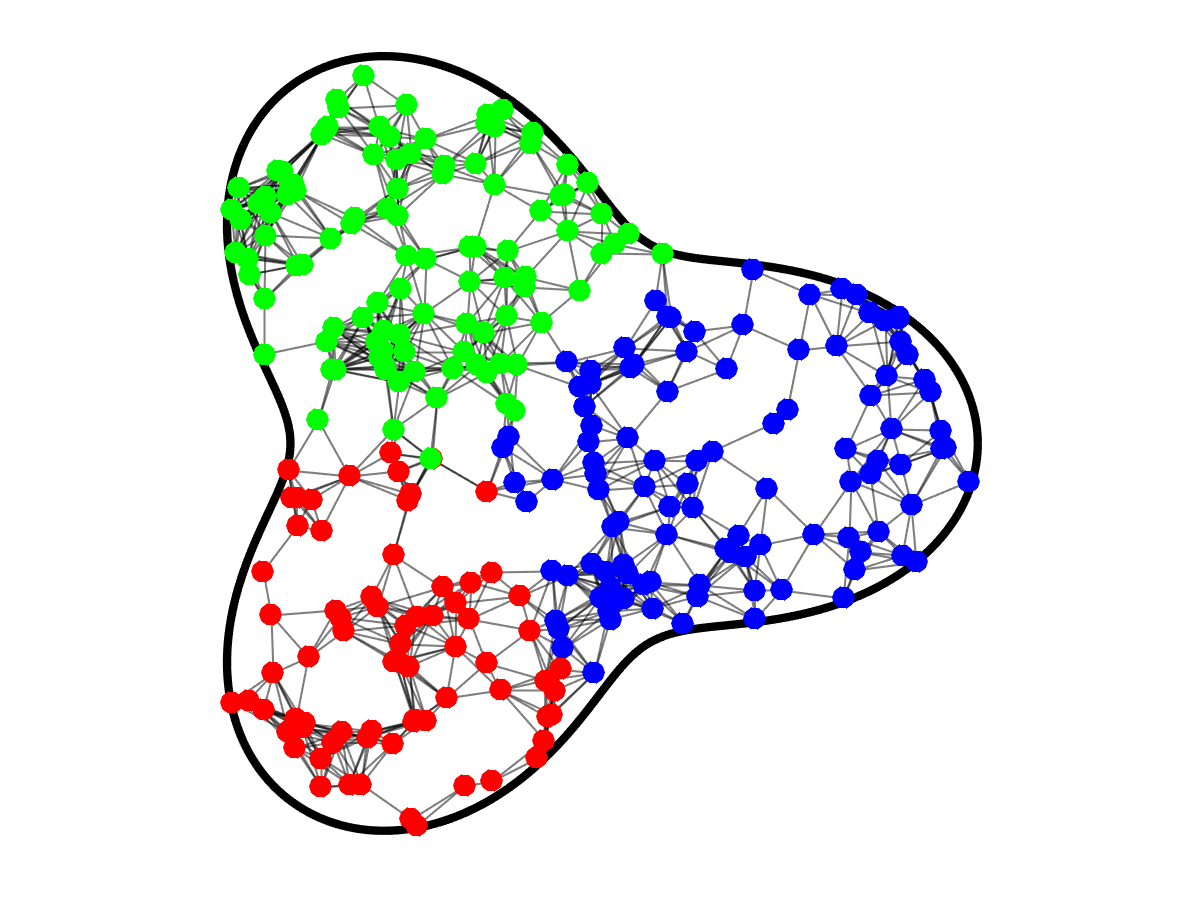}
\includegraphics[width=5cm]{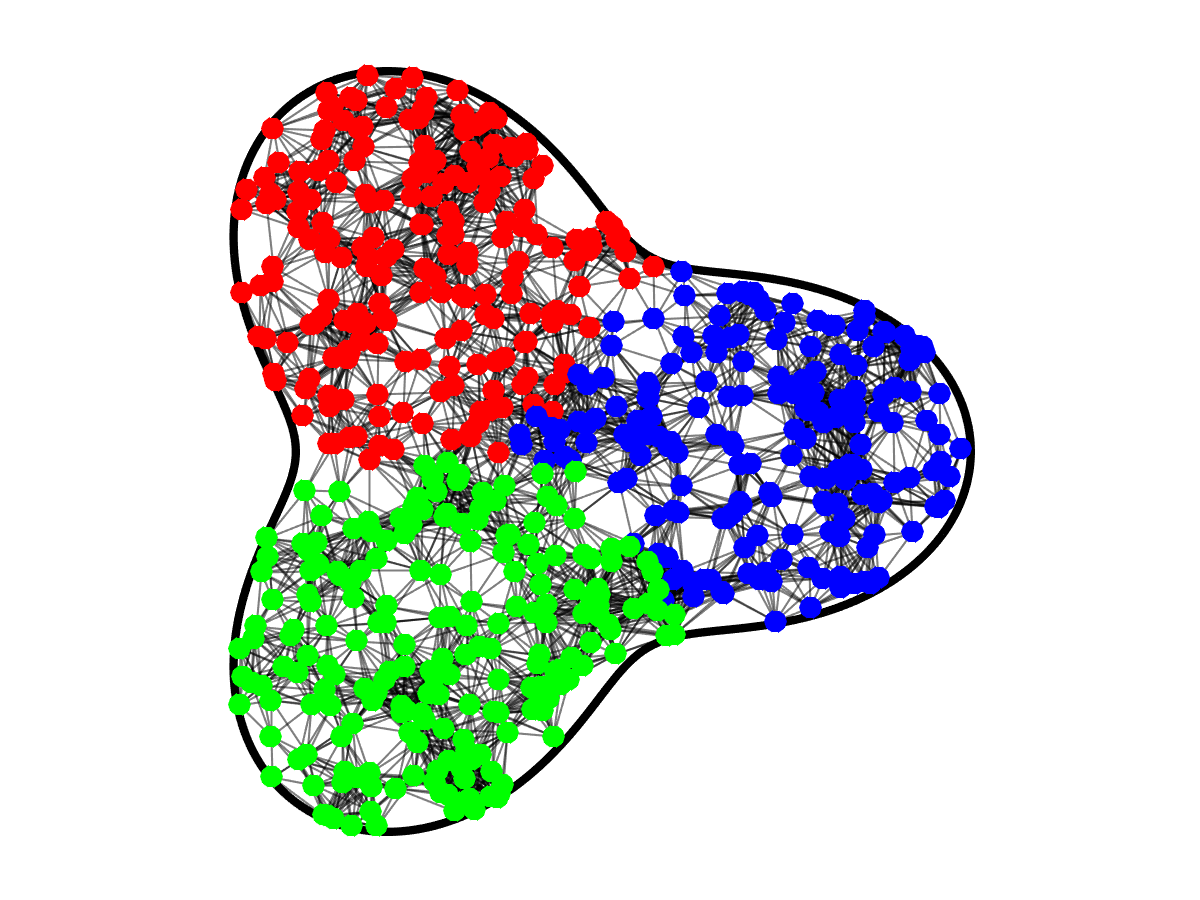}
\includegraphics[width=5cm]{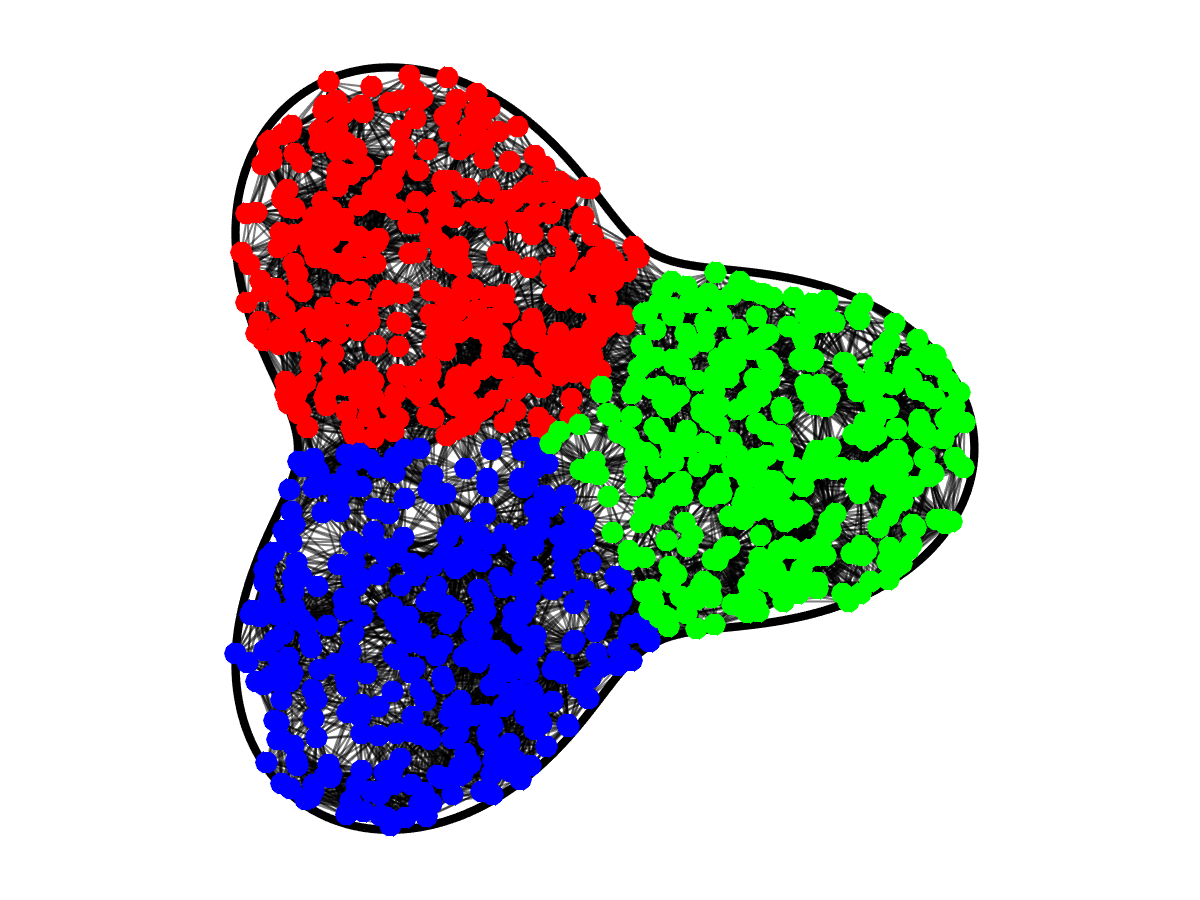}
\end{center}
\caption{An illustration of consistency for the domain in Figure~\ref{fig:cartoon}(top left). Displayed are Dirichlet 3-partitions of a geometric graph constructed from  $n=800$, $1600$, and $3200$ uniformly sampled points. As $n\to \infty$, Corollary~\ref{cor:Hconv} shows that the graph Dirichlet partitions converge to a continuum Dirichlet partition in the Hausdorff sense.  Note that points  sampled from $\Omega \setminus U$ are not displayed and the partition components are colored arbitrarily.  See Section~\ref{sec:CompEx} for computational details. }
\label{fig:conv}
\end{figure}  

Corollary~\ref{cor:Hconv} is illustrated in Figure~\ref{fig:conv}. As the
number of sampled points, $n$, increases, the partition converges (in the
Hausdorff sense) to the partition in the top-right panel of Figure
\ref{fig:cartoon}. The details of the methods used to generate Figures
\ref{fig:cartoon} and \ref{fig:conv} are briefly described in
Section~\ref{sec:CompEx}.

\begin{rmk}
  As observed  in \cite[Section 5.2]{garcia2014tv}, the only place that the
  choice of points $\{x_n\}_{n \in \NN}$ enters in the proofs of
  $\Gamma$-convergence (Theorem~\ref{thm:GconvDE}) is in controlling the rate of weak convergence of the
  empirical measures $\{\nu_n\}_{n \in \NN}$ to $\nu$. This control may be verified directly
  for other sequences, such as the grid points $\Omega \cap
  \bigcup_{r=1}^\infty\frac{1}{2^r}\ZZ^d$, and therefore Theorem
  \ref{thm:GconvDE} holds for these sequences as well.
\end{rmk}

\begin{rmk}\label{rmk:normalized}
  We briefly remark that an analogue of Theorem~\ref{thm:GconvDE}, with
  essentially the same proof, holds when the unweighted graph Laplacian $D - W$
  is replaced by the symmetric normalized graph Laplacian
  $I - D^{-1/2}WD^{-1/2}$. As per \cite{garcia2016spectral}, the discrete and
  continuum weighted energies become
  $$u \mapsto \sum_{i,j=1}^n W_{i,j}\left(\frac{u(x_i)}{\sqrt{D_{i,i}}} -
    \frac{u(x_j)}{\sqrt{D_{j,j}}}\right)^2 \quad \text{ and } \quad u \mapsto
  \int_U\left|\nabla\left(\frac{u}{\sqrt{\rho}}\right)\right|^2\ \rho^2(x)\ dx,$$
    where the continuum energy corresponds to the operator
  $$\mathcal{N}^{\mathrm{sym}}\colon u \mapsto
  -\frac{1}{\rho^{3/2}}\text{div}\left(\rho^2\nabla\left(\frac{u}{\sqrt{\rho}}\right)\right).$$
\end{rmk}

Another natural question is whether we can obviate the need for the auxiliary
domain $\Omega$ if we replace the Dirichlet boundary conditions on $\partial U$
with Neumann boundary conditions. Doing so would not only simplify the
construction of the geometric graphs, but also justifies further investigation
of these techniques in settings where choosing an auxiliary domain may be
infeasible, \eg, when $U$ is a relatively open subset of an embedded
Riemannian manifold.  We answer this question positively in
Section~\ref{sec:Zaremba}. By taking $\Omega = U$ and building the geometric
graphs as before, we have that the discrete energies, which we call
$\mathbf{E}_{n,\veps}^\textrm{Zar}$ $\Gamma$-converge in the $TL^2$-sense to (a
constant multiplicative factor of) a continuum energy $\mathbf{E}^\textrm{Zar}$
corresponding to what we refer to as the \emph{weighted Zaremba partition
  problem}: choose a $k$-partition $U_1,U_2,\ldots,U_k \subseteq U$ that
minimizes
\begin{equation*}
  \sum_{\ell=1}^k \kappa_1(U_\ell).
    \end{equation*}
   Here $\kappa_1(V)$ is the first Zaremba eigenvalue of the weighted
    Laplacian $\mathcal{L}$ with Neumann boundary conditions on
    $\partial V \cap \partial U$ and Dirichlet boundary conditions elsewhere on
    $\partial V$. Specifically, we prove the following analogue of
    Theorem~\ref{thm:GconvDE}.

  \begin{thm}[$\Gamma$-convergence of Dirichlet
    energies] \label{thm:GconvZaremba} Let $\{\veps_n\}_{n \in \NN}$ be an
    admissible sequence. Suppose that $\{x_n\}_{n \in \NN}$ are sampled
    i.i.d.\ from $(U,\nu,\rho)$, with which we create a sequence of
    geometric graphs using a similarity kernel, $\eta$ as described
    before. Then with probability one, as $n \to \infty$, the sequence of
    weighted discrete Dirichlet energies
    $\{\mathbf{E}^{\text{Zar}}_{n, \veps_n}\}_{n \in \NN}$ $\Gamma$-converges to
    $\sigma_\eta\mathbf{E}^{\text{Zar}}$ in the $\TL^2$-sense. Moreover, the
    compactness property also holds for
    $\{\mathbf{E}^{\text{Zar}}_{n, \veps_n}\}_{n \in \NN}$ with respect to the
    $\TL^2$-metric, \ie, every sequence $\{\mathbf{u}_n\}_{n \in \NN}$ such
    that $\mathbf{u}_n \in L^2(V_n;\Sigma_k)$ with
    \[\sup_{n \in \NN}\|\mathbf{u}_n\|_{\nu_{n}} < \infty 
      \qquad \text{and} \qquad \sup_{n \in \NN} \mathbf{E}^{\text{Zar}}_{n,
        \veps_n}(\mathbf{u}_n) < \infty
  \] is precompact in $\TL^2$.
\end{thm}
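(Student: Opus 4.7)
The plan is to mirror the proof of Theorem \ref{thm:GconvDE} almost line-by-line, tracking the two structural changes that distinguish the Zaremba setting from the Dirichlet one: (i) sample points are drawn from $U$ rather than from an auxiliary $\Omega \supsetneq U$, so the admissible class on the graph is $L^2(V_n;\Sigma_k)$ with no externally enforced vanishing, and (ii) on each cell $U_\ell$ of the limiting partition only the ``interior'' portion of the boundary $\partial U_\ell \setminus \partial U$ carries a Dirichlet trace, while $\partial U_\ell \cap \partial U$ is free. As in the Dirichlet case, I would establish the three standard ingredients for $\Gamma$-convergence with respect to the $TL^2$ metric: a liminf inequality, construction of a recovery sequence, and a compactness statement for sequences with bounded energy and mass. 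Throughout, the rate of convergence $\nu_n \to \nu$ provided by the admissibility of $\{\veps_n\}$ and the concentration results cited from \cite{garcia2014empirical,garcia2016spectral} hold with probability one, so I would fix a full-measure event on which those rates hold and work deterministically.

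For the liminf, I would reduce to the scalar case: if $\mathbf{u}_n \overset{TL^2}{\to} \mathbf{u}$ then each component $u_{n,\ell} \to u_\ell$ in $TL^2$, and since the discrete energy decouples into a sum over $\ell$, it suffices to show $\liminf_n \frac{1}{n^2\veps_n^2}E(u_{n,\ell}) \geq \sigma_\eta \int_U |\nabla u_\ell|^2 \rho^2 dx$ for a scalar function. This is exactly the scalar liminf of \cite{garcia2016spectral} (which works on any Lipschitz domain without reference to vanishing at the boundary), so it transfers with no change. The liminf automatically implies $u_\ell \in H^1(U,\rho)$ and that $\mathbf{u} \in H^1(U,\rho;\Sigma_k)$, which is exactly the finite-energy class for $\mathbf{E}^{\text{Zar}}$.

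For the limsup, the absence of an auxiliary boundary collar actually simplifies the construction: given $\mathbf{u} \in H^1(U,\rho;\Sigma_k)$ one chooses, for each component $u_\ell$, a smooth approximation $\tilde u_\ell$ obtained by a $\rho$-weighted mollification that respects the Lipschitz geometry of $U$ (using a Lipschitz extension of $u_\ell$ to a neighborhood, then restricting). The recovery sequence is the pointwise evaluation $\tilde u_\ell(x_i)$ on $V_n$. No modification near $\partial U$ is needed because Neumann is a natural condition: one only needs the energy of the mollified function to approximate $\mathbf{E}^{\text{Zar}}(\mathbf{u})$ and the discrete-to-continuum energy comparison of \cite{garcia2016spectral} (which is local and pointwise) handles the rest. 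The subtle point is that mollification can destroy the $\Sigma_k$ constraint across interior interfaces $\partial U_\ell \setminus \partial U$; I would follow the approach used in the Dirichlet case of component-wise approximation plus a diagonal argument exploiting the density of functions with smooth partition interfaces (which exist by Caffarelli--Lin regularity in the uniform-density case, and by an abstract density argument in general), so that interpenetration error is $o(1)$ along the diagonal.

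Compactness reduces, as in Theorem \ref{thm:GconvDE}, to the scalar statement, which is precisely the graph Poincar\'e/Rellich result of Garc\'ia Trillos--Slep\v{c}ev; the difference with the Dirichlet version is purely bookkeeping (the space is $L^2$ rather than $L^2_U$). The main technical obstacle I anticipate is precisely the recovery step near $\partial U$: one must show that the mollified approximation can be carried out so that (a) the partition structure is preserved up to arbitrarily small energy error, and (b) no spurious boundary layer is introduced by the mollifier at $\partial U$. Problem (b) is handled by working with a Lipschitz extension across $\partial U$ and noting that the $\sigma_\eta$ constant already encodes the correct isotropic averaging of $\eta$ up to the boundary, exactly as in the Neumann $\Gamma$-convergence result for scalar graph Laplacians in \cite{garcia2016spectral}; problem (a) is the density issue discussed above and is the place where the proof genuinely differs from the Dirichlet case.
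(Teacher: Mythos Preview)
Your proposal is correct and takes essentially the same route as the paper. The paper's proof of Theorem~\ref{thm:GconvZaremba} is literally a one-sentence pointer back to the proof of Theorem~\ref{thm:GconvDE}: componentwise reduction to the scalar result of \cite{garcia2016spectral} for the liminf inequality and for compactness, and pointwise evaluation at the sample points for the recovery sequence, with the sole remark that Lemma~\ref{lem:restrict} is no longer needed because there is no vanishing constraint on $V_n\setminus U$ when $\Omega=U$. Your write-up is more detailed but follows the same skeleton.

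Two places where you over-engineer relative to the paper. First, the Neumann-boundary discussion in your limsup step (Lipschitz extension across $\partial U$, possible ``spurious boundary layer'', the role of $\sigma_\eta$ near $\partial U$) is already entirely absorbed into the scalar $\Gamma$-convergence of \cite{garcia2016spectral}, which you and the paper both invoke as a black box; no extra work is needed here. Second, your proposed mechanism for preserving the $\Sigma_k$ constraint under approximation---``density of functions with smooth partition interfaces, which exist by Caffarelli--Lin regularity in the uniform-density case''---conflates two unrelated things. Caffarelli--Lin regularity is a statement about \emph{minimizers} of the mapping problem, not a density result for the whole space $H^1(U,\rho;\Sigma_k)$; it is moreover unavailable for general $\rho$ and for the Zaremba problem, so it cannot be the right tool. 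The paper (in the limsup step of Theorem~\ref{thm:GconvDE}, carried over verbatim) instead reduces to Lipschitz $\mathbf{u}$, for which raw evaluation $\mathbf{u}_n(x_i)=\mathbf{u}(x_i)$ automatically lands in $\Sigma_k$, and handles general $\mathbf{u}$ by a diagonal argument using only that $\Sigma_k\subseteq\RR^k$ is closed. No partition-regularity input is needed.
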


However, as we show in Section~\ref{sec:Zaremba} with a one dimensional example and in Section~\ref{sec:CompEx} with a two dimensional example, Zaremba partitions have smaller boundary partition components, so the choice between the Dirichlet and Zaremba partitioning models may be application dependent. 

Our paper is organized as follows. 
\subsection{Outline}
In Section~\ref{sec:bg}, we give notation and terminology and describe some
previous results on consistency of clustering.  In
Section~\ref{sec:MainResultsProof}, we describe the mapping problem formulation
for generalized Dirichlet partitions, prove Theorem~\ref{thm:GconvDE}, and
explain how Corollary \ref{cor:Hconv} follows.  In Section~\ref{sec:Zaremba},
we discuss extensions to Zaremba partitions,
where the Dirichlet boundary conditions on $\partial U$ have been replaced with
Neumann boundary conditions. In Section~\ref{sec:CompEx}, we discuss a
numerical method for computing Dirichlet and Zaremba partitions; we also
discuss the qualitative differences between the different partitioning schemes.
We conclude in Section~\ref{sec:discuss} with a brief discussion.

\subsection*{Acknowledgments} \ We would like to thank Dejan Slep\v{c}ev for
helpful discussions and comments, especially with
Lemma~\ref{lem:restrict}. We would also like to thank the anonymous referees for their helpful suggestions. 

\section{Background} \label{sec:bg}

\subsection{Notation and terminology}
We let $\chi_A$ denote the indicator function of the set $A$. We use $\amalg$
to denote the union of disjoint sets. We denote by $[n]$ the set
$\{ i \}_{i=1}^n \subseteq \mathbb N$.

Let $L^p(\Omega,\rho) $ denote the function space with norm, 
$$
\|f\|_{L^p(\Omega,\rho)} := \left( \int_\Omega |f(x)|^p\ \rho(x)\, dx
\right)^{\frac{1}{p}}.
$$
The analogous set of vector-valued functions is denoted 
\begin{align*}
  L^p(\Omega,\rho;\RR^k) :=& \bigoplus_{\ell=1}^k L^p(\Omega,\rho) \\
  =& \{\mathbf{f} = (f_1, f_2, \ldots, f_k)\colon f_\ell \in L^p(\Omega,\rho) \text{ for all } \ell \in [k]\}, 
\end{align*}
with
$\|\mathbf{f}\|_{L^p(\Omega,\rho;\RR^k)} := \left( \|f_1\|_{L^p(\Omega,\rho)}^p +
  \|f_2\|_{L^p(\Omega,\rho)}^p + \cdots + \|f_k\|_{L^p(\Omega,\rho)}^p
\right)^{\frac{1}{p}}$.  Finally we denote the vector-valued Sobolev space by
\begin{align*}
  W^{1,p}(\Omega;\RR^k) := \bigoplus_{i=1}^k W^{1,p}(\Omega)
\end{align*} 
with norm
$\|\mathbf{f}\|_{W^{1,p}(\Omega,\rho;\RR^k)} := \left(
  \|f_1\|_{W^{1,p}(\Omega,\rho)}^p + \|f_2\|_{W^{1,p}(\Omega,\rho)}^p + \cdots
  + \|f_k\|_{W^{1,p}(\Omega,\rho)}^p \right)^{\frac{1}{p}}$, where
$$
\|f\|_{W^{1,p}(\Omega,\rho)} := \left( \int_\Omega |f(x)|^p\ \rho(x)\, dx
 + \int_\Omega |\nabla f(x)|^p\ \rho(x)\, dx\right)^{\frac{1}{p}}.
$$
In particular, we denote $W^{1,2}_0(\Omega,\rho;\RR^k)$ by $H^1_0(\Omega,\rho;\RR^k)$. 
Since $\nu$ and the Lebesgue measure are absolutely continuous with respect to each other, $u \in H^1(\Omega, \rho)$ is equivalent to $u \in H^1(\Omega) $. 

\subsection{Previous results on consistency of clustering} \label{sec:pr}
While  consistency results for some clustering methods in vector spaces, such as $k$-means and single-linkage, have  been  proven  \cite{pollard1981strong,hartigan1981consistency}, less is known  about the consistency of graph-based methods and, in particular, the Dirichlet partitioning method.  
The first approaches to demonstrating consistency  for graph-based methods \cite{belkin2005towards,hein2005graphs,gine2006empirical,singer2006graph,hein2007graph,belkin2008towards,vonluxburg2006consistency} 
 compared discrete and continuum partitions by, using the notation above, restricting continuum functions on $\Omega$ to the graphs $\GGG_n$. 
While intuitive, this approach  requires regularity assumptions on  continuum functions beyond that available  for functions associated with  Dirichlet partitions. 

\subsubsection{Spectral clustering}
Clustering is the problem in unsupervised machine learning concerned with
dividing a set $S$ into a fixed number $k$ of (usually pairwise disjoint)
subsets $S_1, S_2, \ldots, S_k$ such that similar elements of $S$ are in the
same cluster and dissimilar elements are in different clusters. 

A popular class of clustering algorithms is spectral clustering algorithms,
where a data set (read: a finite subset $V$ of a metric space) is viewed as a
graph and embedded into a Euclidean space using its graph Laplacian
eigenvectors, after which a clustering algorithm such as $k$-means is applied
\cite{Luxburg:2007,ng2002spectral,shi2000normalized,garcia2016spectral}. Specifically,
if the weighted graph $\GGG = (V, W)$ has unnormalized graph Laplacian, $\LLL =
\DDD - W$, with eigenvectors $\phi_i$ and we wish to divide $V$ into $k$
clusters, then we first embed $V$ into $\RR^k$ using the map $\Phi_k(x) =
(\phi_1(x), \phi_2(x), \ldots, \phi_k(x))$. The spectral embedding $\Phi_k$ is
believed to preserve the geometry of $V$ well enough so that clustering
$\Phi_k(V)$ gives reasonable clusterings of $V$. The point is that we may apply
to $\Phi_k(V)$ clustering algorithms that require or simply benefit from, say
by easing the analysis of the algorithms, working in a vector space as opposed
to just a metric space. In particular, $k$-means, the standard formulation of
which requires working in a vector space, is usually applied to $\Phi_k(V)$.

\subsubsection{$\Gamma$-convergence of the discrete Dirichlet energies and the consistency of spectral clustering}
Building on previous convergence results relating discrete and continuum
Laplacians \cite{chung1997spectral,grigoryan2009analysis}, Garc\'ia Trillos and
Slep\v{c}ev established the following convergence results for discrete
Laplacians on graphs approximating a domain $\Omega \subseteq \RR^d$.  In
brief, they proved a variational convergence of the Dirichlet energies of a
family of graphs $\{\GGG_n\}_{n \in \NN}$ to a weighted Dirichlet energy on
$\Omega$. They then used this result to prove the consistency of spectral
clustering \cite{garcia2016spectral}.

Specifically, they work in the same setting this paper, albeit with
$\Omega = U$, letting $\Omega \subseteq \RR^d$ $(d \geq 2)$ be an open,
bounded, connected domain with Lipschitz boundary and a Borel probability
measure $\nu$ on $\Omega$ with continuous density $\rho$ such that there exists
constants $m, M > 0$ such that $m \leq \rho(x) \leq M$ for all $x \in
\Omega$. From $(\Omega, \nu, \rho)$, they sample a sequence of random points
$\{x_n\}_{n \in \NN}$ i.i.d.. They form the weighted geometric graphs $\GGG_n$
from the first $n$ points $V_n = \{x_1, x_2, \ldots, x_n\}$ of
$\{x_n\}_{n \in \NN}$ by assigning the edge joining $x_i$ and $x_j$ the weight
\[W_{ij}^n = \eta_{\veps_n}(x_i - x_j) = \frac{1}{\veps^d}\eta\left(\frac{x_i -
      x_j}{\veps_n}\right)\] for a similarity kernel,
$\eta\colon \RR^d \to [0,\infty)$ and admissible sequence $\{\veps_n\}_n$. See
Definitions~\ref{def:SimKer} and \ref{def:AdSeq}.

The weighted Dirichlet energies $E_{n,\veps}$ on $\GGG_n$ and $G$ on $\Omega$
are defined by
\begin{equation}\label{eq:ScalarDirDiscrete}
E_{n,\veps}(u) := \frac{1}{n^2\veps^2}\sum_{i,j=1}^n W_{ij}^n(u(x_i) - u(x_j))^2 \qquad \text{for } u \in L^2(\Omega, \nu_n),
\end{equation}
and
\begin{equation}\label{eq:ScalarDirCont}E(u) := \int_\Omega |\nabla u(x)|^2 \rho(x)^2\ dx \qquad \text{for } u \in
H^1(\Omega, \rho)
\end{equation}
respectively.  Note that $E_{n,\veps}(u_n)  = \frac{2}{n^2\veps^2}\langle
\LLL_{n,\veps}u, u \rangle$, where $\LLL_{n,\veps}$ is the unweighted Laplacian of $G_n$ (given $\eta$ and $\veps_n$). Their convergence result is the following.

\begin{thm}[$\Gamma$-convergence of Dirichlet energies \cite{garcia2016spectral}] \label{thm:ScalarGconvDE}
  Let $\{\veps_n\}_{n \in \NN}$ be an admissible sequence and suppose that
  $\{x_n\}_{n \in \NN}$ are independently sampled from $\Omega$ with respect to
  $\nu$. Then with the probability one, the sequence $\{E_{n, \veps_n}\}_{n \in
    \NN}$ $\Gamma$-converges to $\sigma_\eta E$ as $n \to \infty$ in the
  $\TL^2$-sense. Moreover, the compactness property also holds for $\{E_{n,
    \veps_n}\}_{n \in \NN}$ with respect to the $\TL^2$-metric, \ie, every
  sequence $\{u_n\}_{n \in \NN}$ such that $u_n \in L^2(\Omega,
  \nu_n)$ with  
  \[
  \sup_{n \in \NN} \ \| u_n\|_{\nu_{n}} < \infty 
  \quad \textrm{and} \quad
  \sup_{n \in \NN} \ E_{n, \veps_n}(u_n) < \infty
  \] 
  is precompact in  $\TL^2(\Omega)$.
\end{thm}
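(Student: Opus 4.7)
The plan is to establish $\Gamma$-convergence by the standard two-part (liminf/recovery) program together with a separate compactness argument, all framed in the $\TL^2$ metric, which identifies $u_n \in L^2(\Omega,\nu_n)$ with $u \in L^2(\Omega,\nu)$ through a measure-preserving transport. The central technical device is a sequence of transportation maps $T_n\colon \Omega \to \Omega$ with $(T_n)_\#\nu = \nu_n$ and the almost-sure uniform rate $\|T_n - \mathrm{id}\|_{L^\infty} = O\bigl((\log n)^{C_d}/n^{1/d}\bigr)$, i.e., a statement about convergence of empirical measures in the $\infty$-transportation distance. The admissibility hypothesis on $\{\veps_n\}$ is precisely calibrated to guarantee $\|T_n - \mathrm{id}\|_\infty / \veps_n \to 0$, so that displacements by $T_n$ become negligible at the interaction scale $\veps_n$; this is what permits comparison of discrete and continuum functionals.

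For the \emph{recovery} inequality, I would start with $u \in C_c^\infty(\Omega)$ and set $u_n := u|_{V_n}$. Because $V_n$ is i.i.d., a Bernstein-type concentration bound (valid once $n\veps_n^d \to \infty$, strictly weaker than admissibility) yields, almost surely, that $E_{n,\veps_n}(u_n)$ equals, up to a vanishing error, the non-local continuum functional
\begin{equation*}
E_\veps^{\mathrm{NL}}(v) := \frac{1}{\veps^2}\iint_{\Omega\times\Omega}\eta_\veps(x-y)\bigl(v(x)-v(y)\bigr)^2 \rho(x)\rho(y)\,dx\,dy
\end{equation*}
evaluated at $u$ with $\veps = \veps_n$. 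A Taylor expansion in $u$ together with the change of variables $h = (y-x)/\veps_n$ identifies the limit as $\sigma_\eta E(u)$. A density argument in $H^1(\Omega,\rho)$ then extends the recovery sequence to arbitrary $u$, and $\TL^2$-convergence $u_n \to u$ follows because $u_n$ corresponds to $u \circ T_n^{-1}$ under the transport identification, with $u \circ T_n^{-1} \to u$ in $L^2(\Omega,\nu)$ by uniform continuity.

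For the \emph{liminf}, suppose $u_n \overset{\TL^2}{\to} u$ with lifts $\tilde u_n := u_n \circ T_n \in L^2(\Omega,\nu)$. Changing variables in the discrete energy gives
\begin{equation*}
E_{n,\veps_n}(u_n) = \frac{1}{\veps_n^2}\iint_{\Omega\times\Omega}\eta_{\veps_n}\bigl(T_n(x)-T_n(y)\bigr)\bigl(\tilde u_n(x)-\tilde u_n(y)\bigr)^2 \rho(x)\rho(y)\,dx\,dy + o(1),
\end{equation*}
where the $o(1)$ absorbs the difference between $\rho(T_n(x))$ and $\rho(x)$ by uniform continuity of $\rho$. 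Monotonicity of the kernel profile $\bm{\eta}$ then yields $\eta_{\veps_n}(T_n(x)-T_n(y)) \ge (\veps_n/\tilde\veps_n)^d\, \eta_{\tilde\veps_n}(x-y)$ with $\tilde\veps_n := \veps_n + 2\|T_n - \mathrm{id}\|_\infty$ and $\tilde\veps_n / \veps_n \to 1$, giving $E_{n,\veps_n}(u_n) \ge (1+o(1))\, E_{\tilde\veps_n}^{\mathrm{NL}}(\tilde u_n)$. The Bourgain--Brezis--Mironescu / Ponce non-local-to-local theorem, applied along the vanishing scale $\tilde\veps_n$ with $\tilde u_n \to u$ in $L^2(\Omega,\nu)$, delivers $\liminf E_{\tilde\veps_n}^{\mathrm{NL}}(\tilde u_n) \ge \sigma_\eta E(u)$.

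For \emph{compactness}, the same kernel-shift inequality yields a uniform bound on $E_{\tilde\veps_n}^{\mathrm{NL}}(\tilde u_n)$, and the BBM non-local compactness theorem then furnishes $L^2$-precompactness of $\{\tilde u_n\}$, which lifts back to $\TL^2$-precompactness of $\{u_n\}$. The main obstacle is the quantitative transport estimate: without the almost-sure uniform rate $\|T_n - \mathrm{id}\|_\infty = O((\log n)^{C_d}/n^{1/d})$, the kernel-shift inequality cannot survive the singular factor $1/\veps_n^2$. Establishing this rate for empirical measures of i.i.d.\ samples requires delicate dyadic/combinatorial arguments (for $d = 2$ the logarithmic power jumps to $3/4$, reflecting the well-known logarithmic obstruction in the planar matching problem), and the admissibility hypothesis is designed exactly to match it.
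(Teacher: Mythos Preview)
The paper does not itself prove Theorem~\ref{thm:ScalarGconvDE}; it is quoted from \cite{garcia2016spectral}, and the paper only records a one-sentence summary of the strategy: interpolate via the non-local continuum functionals
\[
E_{\veps_n}(u) = \frac{1}{\veps_n^2}\iint_{\Omega\times\Omega}\eta_{\veps_n}(x-y)\bigl(u(x)-u(y)\bigr)^2\rho(x)\rho(y)\,dx\,dy,
\]
and show these $\Gamma$-converge to $\sigma_\eta E$. Your proposal is exactly this program fleshed out---your $E_\veps^{\mathrm{NL}}$ is the paper's $E_{\veps_n}$, your use of the transport maps $T_n$ with the $L^\infty$ rate is the content of Theorem~\ref{thm:linfinity}, and the kernel-shift/monotonicity plus BBM--Ponce arguments are the machinery of the cited source. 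So your approach matches the (sketched) one.

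One small correction: in the recovery step you start from $u \in C_c^\infty(\Omega)$ and then invoke density in $H^1(\Omega,\rho)$. But $C_c^\infty(\Omega)$ is dense only in $H^1_0(\Omega,\rho)$, not in $H^1(\Omega,\rho)$, and the scalar result here carries no boundary condition on $\partial\Omega$. Use Lipschitz functions on $\overline{\Omega}$ (or $C^\infty(\overline{\Omega})$) as the dense class instead; this is also what the paper does when it builds recovery sequences for the vector-valued problem in the proof of Theorem~\ref{thm:GconvDE}.
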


Theorem~\ref{thm:ScalarGconvDE} is proven by interpolating with the family of
functionals \[ E_{\veps_n}(u) := \frac{1}{\veps_n^2} \int_\Omega \int_\Omega
\eta_{\veps_n}(x-y)(u(x) - u(y))^2 \rho(x)\rho(y)\ dx\, dy,\] which are the
expectation of the discrete Dirichlet energies $E_{n,\veps_n}$, and then
showing that $\{E_{\veps_n}\}_{n \in \NN}$ $\Gamma$-converge to $\sigma_\eta E$
in the $TL^2$-sense when $\{\veps_n\}_{n \in \NN}$ is an admissible sequence.

\medskip

We remark that a variety of other $\Gamma$-convergence results for graph functionals exist; see \cite{garcia2016cheeger,van2012gamma,Davis2016}.

\subsection{$\Gamma$-convergence} \label{sec:GammaConv}
We now recall the definition of the $\Gamma$-convergence; for alternative
characterizations and other applications of $\Gamma$-convergence, we recommend
\cite{braides2002beginners}.

\begin{defn}[$\Gamma$-convergence] 
  Let $X$ be a metric space. We say that a sequence of functions $\{F_n\colon X
  \to [0,\infty]\}_{n \in \NN}$ \emph{$\Gamma$-converges} to $F\colon X \to
  [0,\infty]$, and refer to $F$ as the $\Gamma$-limit of this sequence, if the
  following properties hold.
  \begin{enumerate}[(1)]
  \item[(a)] $\liminf$ inequality: For all $x \in X$ and all convergent sequences
    $x_n \to x$, \[\liminf_{n \to \infty}F_n(x_n) \geq F(x).\]
  \item[(b)] $\limsup$ inequality: For all $x \in X$, there exists a convergent
    sequence $x_n \to x$ (a \emph{recovery sequence} for $x$) such
    that \[\limsup_{n \to \infty}F_n(x_n) \leq F(x).\]
  \end{enumerate}
\end{defn}

The significance of $\Gamma$-convergence is that together with a compactness
property on the functions $\{F_n\}_{n \in \NN}$, it implies the convergence
(up to a subsequence) of a sequence of minimizers $x_n$ of $F_n$ (if
they exist) to a minimizer $x$ of $F$, similar to how lower semi-continuity and
coerciveness imply the existence of a minimizer in the direct method of
the calculus of variations. 

In practice, the functions $F_n$ may have different domains $X_n$ and/or
$\Gamma$-convergence may require working with an alternative topology. In both
cases, the usual solution is to extend the functions $F_n$ to a common domain
$X$ containing $\bigcup_{n \in \NN}X_n$ by setting $F_n(x) = \infty$ at each of
the new points $x \in X \setminus X_n$ in the same way that the Dirichlet energy of a
function $u \in L^2(\Omega) \setminus  H^1(\Omega)$ is defined to be $\infty$. Observe
that when verifying $\Gamma$-convergence in this case, it suffices to only
consider sequences $\{x_n\}_{n \in \NN}$ where $x_n \in X_n$ for all $n \in
\NN$.

\subsection{The $TL^p$ metric space} \label{sec:OptTransport}

The functionals \eqref{eq:ScalarDirDiscrete} and \eqref{eq:ScalarDirCont}, studied in 
\cite{garcia2016spectral},  are defined on particular spaces of the family
\[\TL^p(\Omega) = \{(\mu, f) \colon \mu \in \PPP_p(\Omega), f \in L^p(\Omega,
\mu)\},\] where $1 \leq p \leq \infty$ and $\PPP_p(\Omega)$ is the set of Borel
probability measures  on $\Omega$ with finite $p$-th moments, $\int_\Omega
|x-y|^p\ d\mu(x)$, for all $y \in \Omega$. Note that $\PPP_p(\Omega)$ is the
same as the set $\PPP(\Omega)$ of all Borel probability measures on $\Omega$
when $\Omega$ is bounded. The metric on $\TL^p(\Omega)$ is defined by
\[d_{\TL^p}((\mu,f), (\nu,g)) = \inf_{\pi \in \Gamma(\mu,
  \nu)}\left(\iint_{\Omega \times \Omega} |x-y|^p + |f(x) - g(y)|^p\
  d\pi(x,y)\right)^{1/p},\] where $\Gamma(\mu,\nu)$ is the set of all couplings
between $\mu$ and $\nu$. This was shown to be a metric in \cite{garcia2014tv}.
In fact, when $\mu$ is absolutely continuous with respect to the Lebesgue
measure, we may rewrite this distance using transportation maps $T \colon
\Omega \to \Omega$ between $\mu$ and $\nu$,
\[
d_{\TL^p}((\mu,f), (\nu, g)) = \inf_{\substack{T \colon \Omega \to \Omega\\
    T_*\mu = \nu}}\left(\int_{\Omega} |x-T(x)|^p + |f(x) - g\circ T(x)|^p\
  d\mu(x)\right)^{1/p};
 \]
see \cite{villani2003topics} for details.

We will work in the vector-valued analogue of $\TL^p(\Omega)$ for $p = 2$,
\[\TL^p(\Omega; \RR^m) := \{(\mu, \mathbf{f}) \colon \mu \in \PPP_p(\Omega),\ \mathbf{f} \in L^p(\Omega, \mu; \RR^m)\},\] which we
will, abusing notation,  call $\TL^p$. Accordingly, the distance in $\TL^p$
is \[d_{\TL^p}((\mu,\mathbf{f}), (\nu, \mathbf{g})) = \inf_{\pi \in \Gamma(\mu,
  \nu)}\left(\iint_{\Omega \times \Omega} |x-y|^p + |\mathbf{f}(x) -
  \mathbf{g}(y)|^p\ d\pi(x,y)\right)^{1/p},\] which is indeed a metric by the
arguments of \cite{garcia2014tv} mutatis mutandis. As above, when $\mu$ is
absolutely continuous with respect to the Lebesgue measure, we may rewrite this
distance using transportation maps $T \colon \Omega \to \Omega$ between $\mu$
and $\nu$: \[d_{\TL^p}((\mu,\mathbf{f}), (\nu, \mathbf{g})) = \inf_{\substack{T
    \colon \Omega \to \Omega\\ T_*\mu = \nu}}\left(\int_{\Omega} |x-T(x)|^p +
  |\mathbf{f}(x) - \mathbf{g}\circ T(x)|^p\ d\mu(x)\right)^{1/p}.
  \] 
  In particular, as proven in \cite[Propositions 3.3 and 3.12]{garcia2014tv},
  convergence $(\mu_n, \mathbf{f}_n) \xrightarrow{\TL^p} (\mu, \mathbf{f})$
  amounts to $\mu_n \weakly \mu$ and
  \[\|\mathbf{f} - \mathbf{f}_n\circ T_n\|^p_{L^p(\Omega,\mu;\RR^k)} =
    \int_\Omega |\mathbf{f}(x) - \mathbf{f}_n\circ T_n (x)|^p\ d\mu(x) \to 0\]
  for any (equivalently every) stagnating sequence
  $\{T_n \colon \Omega \to \Omega\}_{n \in \NN}$ of transportation maps, \ie,
  for any (equivalently every) sequence
  $\{T_n \colon \Omega \to \Omega\}_{n \in \NN}$ of transportation maps such
  that
  \[\int_\Omega |x - T_n(x)|^2\ dx \to 0.\]

Finally, we recall that for the sequence of empirical measures $\{\nu_n\}_{n
  \in \NN}$ constructed from $\{x_n\}_{n \in \NN}$, the stagnating sequence of
transportation maps $\{T_n\}_{n \in \NN}$ where $T_n$ takes $(\Omega, \nu)$ to
$(\Omega, \nu_n)$, \ie, $T_{n*}\nu = \nu_n$ for all $n \in \NN$, may be chosen
so that $T_n$ sends the points of $\Omega$ only to nearby points of
$\{x_i\}_{i=1}^n$. Specifically, Garc\'{i}a Trillos and Slep\v{c}ev have shown
the following.

\begin{thm}[\cite{garcia2014empirical}]\label{thm:linfinity}
  Let the probability space $(\Omega, \nu, \rho)$ be such that
  $\Omega$ is an open, bounded, connected subset of $\RR^d$ ($d \geq
    2$) with Lipschitz boundary and such that $\nu$ is absolutely continuous
    with a density function $\rho \in C^0(\Omega)$ where there are constants $0
    < m < M$ such that $m \leq \rho(x) \leq M$ for all $x \in \Omega$. Let
  $\{x_n\}_{n \in \NN}$ be a sequence of points sampled i.i.d.\ from $(\Omega,
  \nu, \rho)$ and let $\{\nu_n\}_{n \in \NN}$ be the corresponding sequence of
  empirical measures. Then with probability one, there exists a positive
  constant $C > 0$ and a sequence of transportation maps $\{T_n\}_{n \in \NN}$
  from $(\Omega, \nu)$ to $(\Omega, \nu_n)$, \ie, $T_{n*}\nu = \nu_n$ for all
  $n \in \NN$, such that \[\limsup_{n \to \infty} \frac{\|T_n -
    \text{Id}\|_{\infty}}{r(d,n)} \leq C\] where $r(d,n) = \frac{(\log
    n)^{3/4}}{n^{1/2}}$ if $d = 2$ and $r(d,n) = \frac{(\log
    n)^{1/d}}{n^{1/d}}$ if $d \geq 3$.
\end{thm}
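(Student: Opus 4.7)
The plan is to construct such $T_n$ by first reducing to the uniform case on a cube and then combining dyadic concentration estimates for $\nu_n$ with a PDE-based transport. Since $\rho$ is continuous with $0 < m \leq \rho \leq M$ on the Lipschitz domain $\Omega$, one may produce a bi-Lipschitz homeomorphism $\Phi \colon \Omega \to Q = [0,1]^d$ with $\Phi_* \nu$ equal to the uniform measure on $Q$; the images $y_i := \Phi(x_i)$ are then i.i.d.\ uniform on $Q$, and an $L^\infty$-bound on the transport map between Lebesgue and the pushed-forward empirical measure $\tilde \nu_n := \frac{1}{n}\sum_i \delta_{y_i}$ pulls back to the required bound on $\Omega$ at the cost of a multiplicative factor $\mathrm{Lip}(\Phi^{-1})$. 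It therefore suffices to treat the case $\Omega = Q$ with $\nu$ equal to the Lebesgue measure.

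For this uniform problem, I would mollify $\tilde \nu_n$ at scale $\delta > 0$ to obtain a density $\rho_n^\delta := \tilde \nu_n \ast \psi_\delta$, solve the Neumann problem $-\Delta \phi = \rho_n^\delta - 1$ on $Q$, and transport $\rho_n^\delta\, dx$ to $dx$ via the Dacorogna--Moser flow of $\nabla \phi$; the $L^\infty$ displacement of this flow is controlled by $\|\nabla \phi\|_\infty$, which in turn is bounded via Sobolev embedding $W^{2,p} \hookrightarrow W^{1,\infty}$ for $p > d$ in terms of an appropriate $L^p$ norm of $\rho_n^\delta - 1$. The $L^\infty$ fluctuation $\|\rho_n^\delta - 1\|_\infty$ is itself controlled, with probability at least $1 - n^{-\alpha}$, via a dyadic covering of $Q$ by cubes of side $\delta$, Bernstein's inequality, and a union bound, yielding
\[
\|\rho_n^\delta - 1\|_\infty \;\lesssim\; \sqrt{\frac{\log n}{n \delta^d}} + \frac{\log n}{n \delta^d}.
\]
Composing this with a map that sends the mass assigned to each mollification cell back to the corresponding sample point $y_i$ adds at most $O(\delta)$ to the total $L^\infty$ displacement, so balancing the two contributions in $\delta$ produces a transport map $\tilde T_n$ with $\|\tilde T_n - \mathrm{Id}\|_\infty \lesssim r(d,n)$.

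For $d \geq 3$, optimizing in $\delta$ selects $\delta \sim (\log n / n)^{1/d}$, giving the rate $(\log n)^{1/d} n^{-1/d}$ as claimed. For $d = 2$ a single-scale analysis yields only $\log n / n^{1/2}$; to obtain the sharper $(\log n)^{3/4} n^{-1/2}$ one must perform a chaining argument across all dyadic scales simultaneously, effectively reducing the union-bound penalty from $\log n$ to $\sqrt{\log n}$ in the critical term, which amounts to an Ajtai--Komlós--Tusnády--type matching estimate in the plane. Finally, because the failure probability at each $n$ is summable in $n$ once $\alpha > 1$ is fixed, the Borel--Cantelli lemma upgrades the per-$n$ high-probability bounds to the almost-sure $\limsup$ statement in the theorem.

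The principal technical obstacle is the $d = 2$ case: the naive single-scale approach loses a factor of $(\log n)^{1/4}$, so achieving the sharp exponent $3/4$ requires the chaining refinement above together with careful use of elliptic regularity for the Neumann Poisson problem up to the boundary of $Q$. Both steps depend essentially on the reduction via $\Phi$ to a regular reference domain and on exploiting the full hierarchy of dyadic scales rather than one fixed mollification width; the higher-dimensional case is comparatively routine because the dyadic fluctuation bounds form a geometric series whose largest term dictates the final rate.
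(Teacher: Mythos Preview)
The paper does not give a proof of this statement: Theorem~\ref{thm:linfinity} is quoted verbatim from \cite{garcia2014empirical} and used as a black box, so there is no argument in the present paper to compare your proposal against. Your sketch is a reasonable outline of how the cited result is in fact established---reduction to the unit cube via a bi-Lipschitz change of variables, dyadic concentration estimates, and an AKT-type refinement in dimension two---but none of this appears in the paper under review; the authors simply invoke the theorem to control $\|T_n - \mathrm{Id}\|_\infty$ in Remark~\ref{r:TL2} and implicitly in the definition of admissible sequences.
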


This ``$L^\infty$-control'' on the transportation maps  informs the hypotheses on and the proofs of the main results in \cite{garcia2014tv, garcia2016spectral}. Consequently, our results also depend on Theorem~\ref{thm:linfinity} and we will further use it to extend Corollary \ref{cor:Hconv}; see Remark \ref{r:TL2}.

\subsection{The Hausdorff distance}
We now recall the definition of the Hausdorff distance. We recommend
\cite[Section 1.8]{schneider2014convex} for  basic results and applications
of the Hausdorff distance, and \cite[Chapter 2]{henrot2006extremum} for
applications pertaining to eigenvalue problems.

For nonempty subsets $X, Y \subseteq \RR^d$, the Hausdorff distance between $X$
and $Y$ is defined to be 
\[d_H(X,Y) := \inf\{\veps > 0\colon  X \subseteq Y_\veps \textrm{ and } Y \subseteq X_\veps\}\] 
where $X_\veps := \{x \in \RR^d\colon \text{ there exists } y
\in X \text{ such that } d(x,y) < \veps\}$ is an $\epsilon$-neighborhood of $X$. The Hausdorff distance is in fact a
metric on the set of nonempty compact subsets of $\RR^d$. In the context of
eigenvalue problems on subdomains of a domain $X$ with compact closure, the
Hausdorff distance between two nonempty open subsets $U, V \subsetneq X$ is
often defined to be $d_H(U,V) := d_H(X \setminus U, X \setminus V)$. However,
the Hausdorff distance ceases to be a metric when we mix the two cases, \eg,
$d_H(U,\overline{U}) = 0$ even if $U \neq \overline{U}$. To conclude, we
observe that while convergence with respect to the Hausdorff distance is
defined in the obvious way, \ie, $\lim_{n \to \infty}X_n = X$ if $\lim_{n \to
  \infty} d_H(X_n,X) = 0$, the following alternative characterization is useful
for when all $X_n$ are contained in the same compact set $Y$: $X_n
\overset{H}{\to} X$ as $n \to \infty$ if all $x \in X$ is the limit (with
respect to the Euclidean metric) of some sequence $\{x_n\}_{n \in \NN}$ with
$x_n \in X_n$ for all $n \in \NN$ and if $\lim_{n \to \infty}x_n \in X$, again
with respect to the Euclidean metric, for any convergent sequence $\{x_n\}_{n
  \in \NN}$ with $x_n \in X_n$ for all $n \in \NN$.

\section{Proof of Main Results}  \label{sec:MainResultsProof} 

\subsection{Proof of Theorem~\ref{thm:GconvDE}} By the definitions of the
weighted Dirichlet energies \eqref{eq:ContDirEnergy}, for $\mathbf{u} \in L^2_U(V_n;\Sigma_k)$ and
$\mathbf{v} \in H^1_0(\Omega, \rho; \Sigma_k)$, we can write
\begin{subequations}
\label{obs:sum}
\begin{align}
  \mathbf{E}_{n,\veps_n}(\mathbf{u})   &= E_{n,\veps_n}(u_1) + E_{n,\veps_n}(u_2) + \cdots + E_{n,\veps_n}(u_k) \\
  \mathbf{E}(\mathbf{v}) &= E(v_1) + E(v_2) + \cdots + E(v_k).
  \end{align}
  \end{subequations} 
  The functionals $E_{n,\veps_n}$ and $E$ appearing in the right-hand sides in
  \eqref{obs:sum} are the Dirichlet energies for scalar-valued functions, \eqref{eq:ScalarDirDiscrete} and \eqref{eq:ScalarDirCont}. It was proven in \cite{garcia2016spectral} that $E_{n, \veps_n}
  \overset{\Gamma}{\longrightarrow} \sigma_\eta E$ in the $\TL^2$-sense; see
  Theorem~\ref{thm:ScalarGconvDE}.

Since
  $\mathbf{E}_{n,\veps}(\mu, \mathbf{f}) = \infty$ when $\mu \neq \nu_n$ or
  $\mu = \nu_n$ but $\mathbf{f} \notin L^2_U(V_n;\Sigma_k)$ and
  $\mathbf{E}(\mu, \mathbf{f}) = \infty$ when $\mu \neq \nu$ or $\mu = \nu$ but
  $\mathbf{f} \neq H^1_0(\Omega, \rho; \Sigma_k)$, and hence the claims below are
  either trivial or vacuous in these cases, we only consider sequences in
  $\TL^2$ of the form
  $(\nu_n, \mathbf{u}_n) \overset{\TL^2}{\longrightarrow} (\nu, \mathbf{u})$
  with $\mathbf{u}_n \in L^2_U(V_n;\Sigma_k)$ for all $n$ and
  $\mathbf{u} \in H^1_0(\Omega, \rho; \Sigma_k)$.
  
  Theorem~\ref{thm:GconvDE} requires the proof of the liminf inequality, limsup
  inequality, and a compactness result, which we prove in turn.
  
  \subsection*{Liminf inequality} Claim: For all $(\nu_n, \mathbf{u}_n)
  \overset{\TL^2}{\longrightarrow} (\nu, \mathbf{u})$, $\sigma_\eta
  \mathbf{E}(\mathbf{u}) \leq \liminf_{n \to \infty}
  \mathbf{E}_{n,\veps}(\mathbf{u}_n)$.\vskip10pt

  Given $(\nu_n, \mathbf{u}_n) \rightarrow (\nu, \mathbf{u})$, we restrict
  componentwise to get the convergent sequence $(\nu_n, u_{n,\ell}) \rightarrow
  (\nu, u_\ell)$ in $\TL^2(\Omega)$. By the $\Gamma$-convergence of the scalar
  Dirichlet energies (Theorem~\ref{thm:ScalarGconvDE}), we have
  that \[\sigma_\eta E(u_\ell) \leq \liminf_{n \to \infty}
  E_{n,\veps_n}(u_{n,\ell})\] for all $\ell = 1, 2, \ldots, k$ and therefore, using
  \eqref{obs:sum}, $\sigma_\eta \mathbf{E}(\mathbf{u}) \leq \liminf_{n \to
    \infty} \mathbf{E}_{n,\veps_n}(\mathbf{u}_n)$.
  
  \subsection*{Limsup inequality} Claim: For all
  $\mathbf{u} \in H^1_0(\Omega, \rho; \Sigma_k)$, there exists a recovery
  sequence
  $(\nu_n, \mathbf{u}_n) \overset{\TL^2}{\longrightarrow} (\nu, \mathbf{u})$
  such that
  $\limsup_{n \to \infty} \mathbf{E}_{n,\veps}(\mathbf{u}_n) \leq \sigma_\eta
  \mathbf{E}(\mathbf{u})$, and hence
  $\lim_{n \to \infty} \mathbf{E}_{n,\veps}(\mathbf{u}_n) = \sigma_\eta
  \mathbf{E}(\mathbf{u})$ by the liminf inequality.\vskip10pt

  We extend the argument of \cite{garcia2016spectral} for the limsup inequality
  for $E_{n,\veps_n} \overset{\Gamma}{\to} \sigma_\eta E$. Recalling the
  density of Lipschitz functions in $H^1_0(\Omega)$, we assume, without loss of
  generality, that $\mathbf{u}$ is Lipschitz. (Observe that using a
  diagonalization argument for the case that $\mathbf{u}$ is not Lipschitz
  still gives a recovery sequence $\{\mathbf{u}_{n}\}_{n \in \NN}$ with each
  $\mathbf{u}_n \in L^2(V_n; \Sigma_k)$ since $\Sigma_k \subseteq \RR^k$ is
  closed.) Using this assumption, we then produce a recovery sequence
  $\{u_{n,\ell}\}_{n \in \NN}$ for each component $u_\ell$ of $\mathbf{u}$ by
  taking $\{u_{n,\ell}\}_{n \in \NN}$ to be defined by
  $u_{n,\ell} = (u_\ell(x_1), u_\ell(x_2), \ldots, u_\ell(x_n))$ for all
  $n \in \NN$. Since $\mathbf{u} \in H^1_0(\Omega, \rho; \Sigma_k)$, it follows
  that $\mathbf{u}_{n} = (u_{n,1}, u_{n,2}, \ldots, u_{n,k})$ has image in
  $\Sigma_k$ for all $n$. It was shown in \cite{garcia2016spectral} that
  $u_{n,\ell} \overset{\TL^2}{\longrightarrow} u_\ell$ for all $\ell \in [k]$
  and also that $\{u_{n,\ell}\}_{n \in \NN}$ is a recovery sequence for
  $u_\ell$, so it immediately follows that
  $\mathbf{u}_n \overset{\TL^2}{\longrightarrow} \mathbf{u}$ and, again using
  \eqref{obs:sum}, that it satisfies the limsup inequality, so
  $\{\mathbf{u}_{n} = (u_{n,1}, u_{n,2}, \ldots, u_{n,k})\}_{n \in \NN}$ is a
  recovery sequence for $\mathbf{u}$.
    
  \subsection*{Compactness} Claim: Every sequence $\{\mathbf{u}_n\}_{n \in \NN}$ such
  that $\mathbf{u}_n \in L^2_U(V_n; \Sigma_k)$ with
  \[
  \sup_{n \in \NN}\|\mathbf{u}_n\|_{\nu_{n}} < \infty \quad \textrm{and} \quad
  \sup_{n \in \NN} \mathbf{E}_{n, \veps_n}(\mathbf{u}_n) < \infty
  \] 
  is precompact in $\TL^2$.\vskip10pt

Below, we use the  following lemma due to Dejan Slep\v{c}ev. This
    lemma is a consequence of the more general Lemma~\ref{lem:restrict} whose
    proof is given in Appendix~\ref{app:lemma}.

  \begin{lem} \label{lem:VectorDejan}
    In the current setting, if $V \subseteq \Omega$ is a relatively closed set
    with boundary of zero Lebesgue measure and $\mathbf{u}_n
    \overset{TL^2}{\longrightarrow} \mathbf{u}$ as $n \to \infty$, then
    $\chi_V\mathbf{u}_n \overset{TL^2}{\longrightarrow} \chi_V\mathbf{u}$ as $n
    \to \infty$.
  \end{lem}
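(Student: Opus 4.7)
The plan is to exploit the characterization of $\TL^2$ convergence in terms of transportation maps. Invoking Theorem~\ref{thm:linfinity}, with probability one I fix a stagnating sequence of maps $\{T_n\}_{n \in \NN}$ with $T_{n*}\nu = \nu_n$ that moreover satisfies the sharper $L^\infty$ estimate $\|T_n - \mathrm{Id}\|_\infty \to 0$. The hypothesis $\mathbf{u}_n \overset{\TL^2}{\to} \mathbf{u}$ then unpacks to $\|\mathbf{u}_n \circ T_n - \mathbf{u}\|_{L^2(\Omega,\nu;\RR^k)} \to 0$. Since $\nu_n \weakly \nu$ is automatic and the maps $T_n$ transport $\nu$ to $\nu_n$ irrespective of which function is attached, it suffices to show
\[
\int_\Omega \bigl|\chi_V(x)\mathbf{u}(x) - \chi_V(T_n(x))\mathbf{u}_n(T_n(x))\bigr|^2 \, d\nu(x) \longrightarrow 0.
\]

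Next I split the integrand via the elementary identity
\[
\chi_V(x)\mathbf{u}(x) - \chi_V(T_n(x))\mathbf{u}_n(T_n(x)) = \chi_V(x)\bigl(\mathbf{u}(x) - \mathbf{u}_n(T_n(x))\bigr) + \bigl(\chi_V(x) - \chi_V(T_n(x))\bigr)\mathbf{u}_n(T_n(x)),
\]
and bound the $L^2(\nu)$-norm of the sum by twice the sum of squared $L^2(\nu)$-norms. Because $\chi_V$ is bounded by $1$, the first piece contributes at most $2\|\mathbf{u}_n \circ T_n - \mathbf{u}\|_{L^2(\Omega,\nu;\RR^k)}^2$, which tends to $0$ by construction of $T_n$.

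The main obstacle is the second piece, where the discontinuity of $\chi_V$ interacts with the possible lack of pointwise control on $|\mathbf{u}_n \circ T_n|^2$. Let $A_n := \{x \in \Omega : \chi_V(x) \neq \chi_V(T_n(x))\}$; the second piece contributes $\int_{A_n} |\mathbf{u}_n \circ T_n|^2 \, d\nu$. The key geometric observation is that if $x \in A_n$, then $x$ and $T_n(x)$ lie on opposite sides of $\partial V$, so $A_n \subseteq \{x \in \Omega : \mathrm{dist}(x, \partial V) \leq \|T_n - \mathrm{Id}\|_\infty\}$. Because $\partial V$ has zero Lebesgue measure and $\rho \leq M$, the neighborhoods on the right shrink to a $\nu$-null set, so $\nu(A_n) \to 0$.

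To absorb the absence of uniform integrability of $|\mathbf{u}_n \circ T_n|^2$, I dominate crudely via $|\mathbf{u}_n \circ T_n|^2 \leq 2|\mathbf{u}|^2 + 2|\mathbf{u}_n \circ T_n - \mathbf{u}|^2$, which yields
\[
\int_{A_n} |\mathbf{u}_n \circ T_n|^2 \, d\nu \leq 2 \int_{A_n} |\mathbf{u}|^2 \, d\nu + 2 \|\mathbf{u}_n \circ T_n - \mathbf{u}\|_{L^2(\Omega,\nu;\RR^k)}^2.
\]
The second summand vanishes by hypothesis. For the first, absolute continuity of the integral applied to the fixed function $|\mathbf{u}|^2 \in L^1(\Omega,\nu)$ together with $\nu(A_n) \to 0$ forces $\int_{A_n} |\mathbf{u}|^2 \, d\nu \to 0$. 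Combining the two pieces completes the proof.
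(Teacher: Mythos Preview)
Your argument is correct. It differs from the paper's in one main respect: the paper derives Lemma~\ref{lem:VectorDejan} from the more general Lemma~\ref{lem:restrict}, whose proof works at the level of transportation \emph{plans} $\pi_n \in \Gamma(\mu_n,\mu)$ and makes no use of the $L^\infty$ control from Theorem~\ref{thm:linfinity}. There the ``bad'' set is $\{(x,y):|x-y|\geq\delta\}\cup(\partial V)_\delta$, whose $\pi_n$-mass is small because the coupling concentrates near the diagonal and $\mu((\partial V)_\delta)$ is small; the domination of $|f_n(x)|^p$ by $|f_n(x)-f(y)|^p + |f(y)|^p$ plays exactly the role your inequality $|\mathbf{u}_n\circ T_n|^2 \leq 2|\mathbf{u}|^2 + 2|\mathbf{u}_n\circ T_n - \mathbf{u}|^2$ plays. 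Your route instead exploits the specific ``current setting'' (empirical measures, absolutely continuous $\nu$) to invoke Theorem~\ref{thm:linfinity} and obtain transportation \emph{maps} with $\|T_n-\mathrm{Id}\|_\infty\to 0$, which makes the inclusion $A_n \subseteq (\partial V)_{\|T_n-\mathrm{Id}\|_\infty}$ immediate and streamlines the argument. The trade-off is generality: the paper's proof applies to arbitrary $\TL^p$ sequences and arbitrary $p\geq 1$, while yours is tailored to the situation at hand---which is precisely what Lemma~\ref{lem:VectorDejan} asks for.
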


  Since
  $\|\mathbf{u}_n\|^2 = \|u_{n,1}\|^2 + \|u_{n,2}\|^2 + \cdots +
  \|u_{n,k}\|^2$, the assumption that $\{\mathbf{u}_n\}_{n \in \NN}$ are
  bounded implies that the component functions $\{u_{n,\ell}\}_{n \in \NN}$ are
  bounded for any fixed $\ell \in [k]$. Likewise, the assumption that
  $\{\mathbf{E}_{n, \veps_n}(\mathbf{u}_n)\}_{n \in \NN}$ are bounded implies
  that the energies of the component functions $\{E(u_{n,\ell})\}_{n \in \NN}$
  are bounded for any fixed $\ell \in [k]$. We may then invoke the compactness
  result for $E_{n,\veps_n} \overset{\Gamma}{\to} \sigma_\eta E$ to conclude
  that each sequence $\{u_{n,\ell}\}_{n \in \NN}$ is precompact in
  $\TL^2(\Omega)$ and therefore so is $\{\mathbf{u}_n\}_{n \in \NN}$ in
  $\TL^2(\Omega;\RR^k)$.  Moreover, we have that any limit point
  $\mathbf{u} \in H^1(\Omega, \rho; \RR^k)$, since each component $u_\ell$ has
  finite Dirichlet energy $E(u_\ell)$ by the liminf equality
  \[\sigma_\eta E(u_\ell) \leq \liminf_{n \to \infty} E_{n,\veps_n}(u_{n,\ell})
    < \infty,\] so $u_\ell \in H^1(\Omega, \rho)$, and in fact
  $u_\ell \in H^1_0(U, \rho)$ by the following argument. Supposing that
  $\mathbf{u}_n \overset{\TL^2}{\longrightarrow} \mathbf{u}$ for simplicity,
  using Lemma~\ref{lem:restrict} with $V = \Omega \setminus U$, we have that
  $\mathbf{u}_n \chi_{\Omega \setminus U} \overset{\TL^2}{\longrightarrow}
  \mathbf{u}\chi_{\Omega \setminus U}$ and thus
  $\mathbf{u} = \mathbf{u}\chi_{U}$ since
  $\mathbf{u}_n \chi_{\Omega \setminus U} = 0$ for all $n \in \NN$.

  The only thing left to show is that $\mathbf{u}(\Omega) \subseteq \Sigma_k$
  a.e., but for this we need only to recall that $\mathbf{u}_n
  \overset{\TL^2}{\longrightarrow} \mathbf{u}$ means that $\mathbf{u}_n \circ
  T_n \xrightarrow{L^2(\Omega; \RR^k)} \mathbf{u}$ for some stagnating
  sequence of transportation maps $\{T_n \colon \Omega \to \Omega\}_{n \in
    \NN}$, so the claim is an immediate consequence of convergence in
  $L^2(\Omega;\Sigma_k)$ and $\Sigma_k \subseteq \RR^k$ being
  closed. Therefore $\{\mathbf{u}_n\}_{n \in \NN}$ is precompact in
  $\TL^2$.\qed

  \subsection{Proof of Corollary \ref{cor:Hconv}}  Recalling that any sequence of closed subsets of a
  fixed compact set in $\RR^d$ is precompact with respect to the Hausdorff
  distance \cite[Theorem 1.8.5]{schneider2014convex}, it suffices to prove that
  the only limit points of $\{U_{n,\ell}\}_{n \in \NN}$ are
  $\overline{U_\ell}$.  After passing to a subsequence of
  $\{\mathbf{u}_n\}_{n \in \NN}$, we suppose that 
  $\lim_{n \to \infty} U_{n,\ell} = V_\ell$ for all $\ell \in [k]$. We fix $m \in [k]$ and prove that $V_m = \overline{U_{m}}$.

We first claim that $V_m \supseteq \overline{U_m}$. If not, then there is
$y \in (\Omega \setminus V_m) \cap U_m \cap \{x_n\}_{n \in \NN}$ since
$(\Omega \setminus V_m) \cap U_m$ is of nonzero $\nu$-measure and
$\{x_n\}_{n \in \NN}$ is necessarily dense in $\Omega$ because the conclusion
of Theorem~\ref{thm:GconvDE} holds. Such $y$ is contained in
$U_m \setminus U_{n,m}$, hence $u_{n,m}(y) = 0$, for all sufficiently large
$n > 0$, but $u_m(y) \neq 0$ since $U_m = u_m\inv(0,\infty)$. By the continuity
of $u_m$, we may choose $\veps > 0$ such that
$B_\veps(y) \subseteq \Omega \setminus V_m$ and
\[\|u_{m} - u_{n,m}\circ T_n\|_2^2 \geq 
\int_{B_{\veps}(y)} |u_{m}(x) - u_{n,m}\circ T_n(x)|^2\ dx 
= \int_{B_{\veps}(y)} |u_{m}(x)|^2\ dx >
  0\] 
  for all sufficiently large $n > 0$. But this implies that
\[\lim_{n \to \infty} \|u_{m} - u_{n,m}\circ T_n\|_2^2 \neq 0,\] a
contradiction, and therefore $V_m \supseteq \overline{U_m}$.

To conclude by proving the reverse inclusion, $\overline{U_m} \supseteq V_m $. If $k = 1$, the inclusion follows from $V_\ell \subseteq \overline{U}$ for all $\ell \in [k]$ and $U_1 = U$.  
Let $k > 1$. Since
$\amalg_{\ell \in [k]} U_{n,\ell} = U \cap \{x_n\}_{n \in \NN}$ for all
$n \in \NN$, it follows that $V_{m} \supsetneq \overline{U_{m}}$ would imply
that for some $\ell \neq m$, $V_\ell$ would not contain $U_{n,\ell}$ for
sufficiently large $n > 0$ and thus $V_\ell$ would not contain
$\overline{U_\ell}$, a contradiction. It follows that $V_{m} = \overline{U_{m}}$, and
since $m$ is  arbitrary, we conclude that
$\lim_{n \to \infty} U_{n,\ell}$ both exist and equal $\overline{U_{\ell}}$ for
all $\ell \in [k]$.\qed
\begin{rmk} \label{r:TL2}
  While Corollary \ref{cor:Hconv} states the convergence of the supports of the
  discrete functions $\{u_{n,\ell}\}_{n \in \NN}$ which converge to $u_\ell$ in
  the $TL^2$-sense as $n \to \infty$, it implies that an analogous result holds
  for the supports of their extensions $\{u_{n,\ell}\circ T_n\}_{n \in \NN}$ to
  $\Omega$ which converge to $u_\ell$ in the $L^2(\Omega,\nu)$-sense as $n \to
  \infty$. In particular, if we let $V_{n,\ell} =
  \overline{T_n\inv(U_{n,\ell})}$, then \[\lim_{n \to \infty} V_{n,\ell} =
  \lim_{n \to \infty} U_{n,\ell} = \overline{U_\ell}.\] To see this, it
  suffices to note that by Theorem~\ref{thm:linfinity}, $d_H(U_{n,\ell},V_{n,\ell}) <
  Cr_{(d,n)}$ for sufficiently large $n > 0$, so \[\lim_{n \to \infty}
  d_H(U_{n,\ell},V_{n,\ell}) = \lim_{n \to \infty} Cr_{(d,n)} =0\] and thus the first
  equality holds, with the second equality holding by Corollary \ref{cor:Hconv}.
\end{rmk}

\section{Zaremba Partitions} \label{sec:Zaremba} 
We now consider a modification of Dirichlet partitions, where the Dirchlet
boundary conditions on $\partial U$ have been replaced by Neumann boundary
conditions. We refer to these partitions as Zaremba partitions since eigenvalues of the Laplacian with mixed Dirichlet and Neumann boundary conditions arise. 
We modify our arguments from Section~\ref{sec:MainResultsProof} 
to show that the weighted Zaremba partition problem objective is the
$\Gamma$-limit of the discrete Dirichlet energies when we no longer explicitly
enforce Dirichlet boundary conditions on $V_n \setminus U$. 
However, the regularity results for the continuum Dirichlet partition problem, which hold at least when $\rho \equiv |\Omega|^{-1}$, do not necessarily carry over to the Zaremba partition problem. Qualitative differences between Dirichlet and Zaremba partitions are discussed in Section~\ref{s:Interval} and in Section~\ref{sec:CompEx}.

\subsection{The weighted Zaremba-Laplacian and its spectrum}
For the convenience of the reader, we state some basic results regarding the
spectrum of the weighted Zaremba-Laplacian, by which we mean the operator
$\mathcal{L}\colon u \mapsto -\frac{1}{\rho}\text{div}(\rho^2\nabla u)$
restricted to the Sobolev space
\begin{equation}\label{e:SolbolevZaremba}
H^1(V, \Gamma_D, \rho) := \{u \in H^1(V, \rho):u|_{\Gamma_D}=0\}
\end{equation} 
for an open, bounded, connected Lipschitz domain V and a fixed relatively open
subset $\Gamma_D \subseteq \partial V$; see, {\it e.g.}, \cite[Sections 6.3.1,
8.6]{attouch2014variational}. We say that $(\kappa,u)$ is an eigenpair if $u$
is a weak solution to the system
\begin{align*}
  -\frac{1}{\rho}\text{div}(\rho^2\nabla u) &= \kappa u  \text{ in } V\\
  u &= 0  \ \ \text{ on } \Gamma_D\\
\partial_n u  &= 0  \ \ \text{ on } \Gamma_N := \partial V
                                               \setminus \Gamma_D.
\end{align*}
The standard arguments for the similarly-weighted Dirichlet- and
Neumann-Laplacians (see \cite{garcia2016spectral}) easily extend to the
weighted Zaremba-Laplacian. In particular, the Krein-Rutman theorem
\cite[Theorem 1.2.6]{henrot2006extremum} implies that the first eigenvalue
$\kappa_1$ is simple and that its associated eigenfunction $v_1$ has constant
sign a.e.; as above, we assume eigenfunctions to be positive and have
$L^2(V,\rho)$-norm equal to one.


In the context of Zaremba partitions on $U$, we are only interested in Zaremba
problems of the form $V \subseteq U$ and $\Gamma_D = \partial V \cap
U$. However, we must make sense of the case when $U$ is only quasi-open since
we lack a priori regularity results for $V$. Suppose that $A \subseteq U$ is
quasi-open. We then define
$$ 
H^1_\mathrm{Zar} (U,A, \rho) := \{u \in H^1(U, \rho): u = 0 \text{ q.e. in } U \setminus
A\},$$ which takes the role of
$H^1_0(A, \rho) := \{u \in H^1_0(U, \rho): u = 0 \text{ q.e. in } U \setminus A\}$
in the spectral theory of the Dirichlet-Laplacian on $A$. Observe that if $A$
is open, then $H^1_\mathrm{Zar} (U,A, \rho) = H^1(A,\partial A \cap U, \rho)$, as defined in \eqref{e:SolbolevZaremba}. We
then define the first eigenvalue of the Zaremba-Laplacian on $A$ (with the
prescribed boundary conditions) to be
$$
\kappa_1(A) = \min_{\substack{u \in H^1_\mathrm{Zar} (U,A, \rho) \\
 \|u\|_{L^2(U,\rho)}=1}} \int_{U} |\nabla u(x)|^2\ \rho^2(x) \, dx.
 $$

While the basic spectral theory for the Laplacian is quite similar for the
Neumann-, Dirichlet-, and Zaremba-Laplacians, an important difference is that the 
the Dirichlet-Laplacian eigenvalues have  the monotonicity property:
 if $A \subseteq B$ in the sense of harmonic capacity, then
$\lambda_1(A) \geq \lambda_1(B)$. However, this property fails for the Neumann-Laplacian
\cite[Section 1.3.2]{henrot2006extremum} and thus also fails for the
Zaremba-Laplacian since the Neumann-Laplacian is the  case
$\Gamma_D = \varnothing$. In fact, the counterexample to monotonicity for Neumann eigenvalues involving one rectangle containing another given in \cite[Figure 1.1]{henrot2006extremum} can be modified for Zaremba eigenvalues. Here, one can impose Dirichlet boundary conditions on one of the short sides of both the small and large rectangles. 

For $A \subseteq B$, we have
$H^1_\mathrm{Zar} (U,A, \rho) \subseteq H^1_\mathrm{Zar} (U,B,
\rho)$, which immediately gives the following restricted monotonicity result.
  \begin{prop} \label{prop:Mono}
    If $A, B \subseteq U$ are quasi-open subsets such that $A \subseteq B$ in
    the sense of harmonic capacity, then $\kappa_1(A) \geq \kappa_1(B)$. \qed
  \end{prop}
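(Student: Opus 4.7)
The plan is to reduce the claim to the inclusion of admissible Sobolev spaces already noted in the paragraph preceding the proposition, and then apply a standard variational monotonicity argument. Concretely, I would first verify that the inclusion $A \subseteq B$ quasi-everywhere implies the inclusion of function spaces
\[
H^1_\mathrm{Zar}(U, A, \rho) \subseteq H^1_\mathrm{Zar}(U, B, \rho).
\]
This is immediate from the definitions: if $u \in H^1(U,\rho)$ vanishes q.e.\ on $U \setminus A$, then since $U \setminus B \subseteq U \setminus A$ up to a set of zero capacity, $u$ also vanishes q.e.\ on $U \setminus B$, so $u \in H^1_\mathrm{Zar}(U, B, \rho)$.

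Next, I would apply the variational characterization of $\kappa_1$. Both $\kappa_1(A)$ and $\kappa_1(B)$ are defined as infima of the Rayleigh quotient
\[
u \mapsto \int_U |\nabla u(x)|^2 \rho^2(x)\, dx
\]
over unit-norm functions in $H^1_\mathrm{Zar}(U, A, \rho)$ and $H^1_\mathrm{Zar}(U, B, \rho)$, respectively. Since the admissible set for $\kappa_1(A)$ is contained in the admissible set for $\kappa_1(B)$, the infimum defining $\kappa_1(B)$ is taken over a larger collection of competitors, and therefore $\kappa_1(A) \geq \kappa_1(B)$.

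There is essentially no obstacle; the only subtle point is the reliance on the (quasi-everywhere) pointwise inclusion of quasi-open sets, which is exactly what ``$A \subseteq B$ in the sense of harmonic capacity'' is designed to encode, so the translation from capacity to q.e.\ vanishing conditions is built into the definition of $H^1_\mathrm{Zar}$. The contrast with the Dirichlet case is then worth flagging in the proof: the same inclusion-of-spaces argument yields Dirichlet monotonicity via $H^1_0(A,\rho) \subseteq H^1_0(B,\rho)$, whereas the failure of monotonicity for the pure Neumann Laplacian is not contradicted here because the Zaremba admissible space enforces a capacity-based vanishing condition that genuinely shrinks with $A$, unlike a pure Neumann condition.
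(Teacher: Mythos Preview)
Your proposal is correct and follows exactly the paper's approach: the paper simply notes that $A \subseteq B$ implies $H^1_\mathrm{Zar}(U,A,\rho) \subseteq H^1_\mathrm{Zar}(U,B,\rho)$ and states that the monotonicity result follows immediately, marking the proposition with a \qed. Your write-up just spells out the variational step that the paper leaves implicit.
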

As the above example shows, monotonicity may fail when we lack an embedding of the relevant Sobolev
space on $A$ into that of $B$ in an $L^2$-norm-preserving manner. 
  
\subsection{Zaremba partitions} \label{s:ZarePartS}
 We define a \textit{Zaremba $k$-partition} to be a collection of $k$ disjoint quasi-open subsets 
 $U_1,U_2,\ldots,U_k$ of $U$ that attains the minimum of
\begin{equation}\label{eq:SumZaremba} 
\sum_{\ell=1}^k \kappa_1(U_\ell).
\end{equation} 
As for Dirichlet partitions, we have the equivalent mapping problem formulation:
\begin{equation}\label{eq:MapProbZaremba}
  \min
  \left\{\mathbf{E}^{\mathrm{Zar}}(\mathbf{u}) \colon \mathbf{u} = (u_1, u_2, \ldots, u_k) \in
    H^1(U, \rho; \Sigma_k), \int_U u_\ell^2(x) \ \rho(x)\, dx = 1 \text{ for all } \ell \in [k]\right\}, 
 \end{equation}
where
\begin{equation*}
\mathbf{E}^{\mathrm{Zar}}(\mathbf{u}) :=
\sum_{\ell=1}^k\int_{U}|\nabla u_\ell(x)|^2\ \rho^2(x)\, dx
\end{equation*}
if $\mathbf{u} \in H^1(U,\rho;\Sigma_k)$ and
$\mathbf{E}^{\mathrm{Zar}}( \mathbf{u}) = \infty$ for all other
$\mathbf{u} \in L^2(U,\rho; \Sigma_k)$.  
Just as the direct methods apply to 
the Dirichlet energy  \eqref{eq:ContDirEnergy} restricted to $H^1_0(U, \rho;\Sigma_k)$,
they may be used over $H^1(U, \rho;\Sigma_k)$ and so we
have that \eqref{eq:MapProbZaremba} has a   solution
$\mathbf{u} = (u_1,u_2,\ldots,u_k)$. As before, we derive the partition
$U_1, U_2, \ldots, U_k$ from $\mathbf{u}$ by taking
$U_\ell = u_\ell\inv(0,\infty)$ for all $\ell \in [k]$. Likewise, we may assume
that $\mathbf{u}$ is nonnegative and quasi-continuous.

Proposition \ref{prop:Mono} implies  the following monotonicity result for Zaremba partitions.  
\begin{lem}\label{prop:ZarembaPartMono}
  The shape functional \eqref{eq:SumZaremba} is monotonic in the sense of
  harmonic capacity, \ie, if $k$-partitions $U_1, U_2, \ldots, U_k$ and
  $V_1, V_2, \ldots, V_k$ are such that $U_\ell \subseteq V_\ell$ in the sense
  of harmonic capacity for all $\ell \in [k]$, then
  \begin{equation*} \sum_{\ell=1}^k \kappa_1(V_\ell) \leq \sum_{\ell=1}^k
\kappa_1(U_\ell).
    \end{equation*}\qed
\end{lem}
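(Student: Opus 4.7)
The proof is essentially immediate from Proposition \ref{prop:Mono} applied componentwise. Here is the plan.

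Given $k$-partitions $U_1,\ldots,U_k$ and $V_1,\ldots,V_k$ of $U$ such that $U_\ell \subseteq V_\ell$ in the sense of harmonic capacity for every $\ell \in [k]$, I would fix any $\ell$ and invoke Proposition \ref{prop:Mono} directly with $A = U_\ell$ and $B = V_\ell$ to obtain
\begin{equation*}
\kappa_1(V_\ell) \leq \kappa_1(U_\ell).
\end{equation*}
Summing these inequalities over $\ell \in [k]$ yields the desired conclusion
\begin{equation*}
\sum_{\ell=1}^k \kappa_1(V_\ell) \leq \sum_{\ell=1}^k \kappa_1(U_\ell).
\end{equation*}

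The only conceptual point worth flagging is that the hypothesis that the $U_\ell$ and $V_\ell$ form partitions is not actually used in the argument; the conclusion already follows from the componentwise quasi-capacitary inclusions alone. In particular, there is no need to exploit any interaction between different parts of the partition (for instance, disjointness up to capacity), and no subtlety with the failure of monotonicity under the Zaremba boundary conditions arises here because we are precisely in the restricted setting for which Proposition \ref{prop:Mono} recovers monotonicity, namely when the Sobolev space $H^1_{\mathrm{Zar}}(U,A,\rho)$ embeds into $H^1_{\mathrm{Zar}}(U,B,\rho)$ in an $L^2(U,\rho)$-norm-preserving manner. Thus the main obstacle — establishing Zaremba monotonicity — has already been overcome at the single-set level in Proposition \ref{prop:Mono}, and the lemma is a one-line summation.
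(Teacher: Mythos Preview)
Your proposal is correct and matches the paper's approach exactly: the paper presents the lemma as an immediate consequence of Proposition~\ref{prop:Mono} (note the \qed\ at the end of the statement), and your componentwise application of that proposition followed by summation is precisely the intended argument.
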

Proposition \ref{prop:ZarembaPartMono} implies that a Zaremba $k$-partition satisfies $\overline{U} = \cup_{i=1}^k \overline{U_i}$. As before, this justifies the term ``partition'' in the name. 

Despite the Zaremba partition problem being formally similar to the Dirichlet
partition problem and possessing its montonicity property, it's not clear that
any of the regularity results for the latter carry over to the former. In
particular, though it seems plausible,  it's not clear to us that minimizers
$\mathbf{u}$ of $\mathbf{E}^{\mathrm{Zar}}$ have continuous representatives or
that Zaremba partitions consisting of open sets exist. 
The argument of \cite{caffarelli2007optimal} realizes minimizers of \eqref{eq:ContDirEnergy} as uniform limits of singularly perturbed elliptic equations sharing uniform H\"older limits for all exponents $\alpha \in (0,1)$. 
The obvious modification for Zaremba partitions would be to change the boundary conditions from Dirichlet to Neumann, but it's not clear to us that this argument generalizes. 

\subsection{Partitions of an interval} \label{s:Interval} Another notable
difference the two types of partitions is seen in the case of an interval,
where both problems may be solved exactly. By the monotonicity property and the
connectedness of the sets forming an optimal partition (of both types), we need
only to consider the partitions of the form
$$
U_1 = (0,t_1), U_2 = (t_1, t_1 + t_2), \ldots, U_k = (t_1 + t_2 + \cdots + t_{k-1}, t_1 + t_2 + \cdots + t_{k})
$$ 
where $0 \leq t_\ell \leq 1$ for all $\ell \in [k]$ and $t_1 + t_2 + \cdots + t_k = 1$. 
For an open interval of length $t$,
we have that $\lambda_1 = \pi^2/t^2$ and $\kappa_1 = \pi^2/4t^2$. Thus, the Dirichlet and Zaremba partition problems on $(0,1)$ reduce to
minimizing the functions
$$
f(t_1,t_2,\ldots,t_k) = \pi^2\left(\frac{1}{t_1^2} + \frac{1}{t_2^2} + \cdots
  + \frac{1}{t_k^2}\right)
  $$ 
  and
$$
g(t_1,t_2,\ldots,t_k) = \pi^2\left(\frac{1}{4t_1^2} + \frac{1}{t_2^2} +
  \frac{1}{t_3^2} + \cdots + \frac{1}{t_{k-1}^2} + \frac{1}{4t_k^2}\right),
  $$
respectively, subject to the aforementioned constraints. 
Routine applications of the method of Lagrange multipliers reveal that while the unique Dirichlet
partition is the equipartition 
$$
t_1 = t_2 = \cdots = t_k = 1/k,
$$ 
the unique Zaremba partition has 
\begin{align*}
t_1 &= t_k = 1/(2 + (k-2)\sqrt[3]{4})\\  
t_2 &= t_3 = \cdots = t_{k-1} = \sqrt[3]{4}/(2 +  (k-2)\sqrt[3]{4}). 
\end{align*} 
It follows that $t_2/t_1 = \sqrt[3]{4}$ (independent of $k$), so that the boundary partition components are shorter than interior components. 
We'll further discuss qualitative differences between Dirichlet and Zaremba partitions in Sections~\ref{s:PartDiff} and \ref{sec:discuss}.      
 
\subsection{Consistency results for Neumann boundary conditions on $\partial U$}
We now show that  the Zaremba partition problem is the limit (as $n\to \infty$) of  
the discrete Dirichlet partition problem in which $\Omega = U$ and the side
constraint in \eqref{eq:DiscEnergyGeoGraph} that $u(x_i) = 0 \text{ if } x_i
\in \Omega \setminus U$ is vacuous. 

We construct the graphs $\{G_n\}_{n\in\NN}$ as before, but replace
$\{\mathbf{E}_{n, \veps_n}\}_{n \in \NN}$ with
$\{\mathbf{E}^{\mathrm{Zar}}_{n, \veps_n}\}_{n \in \NN}$ where
$$
\mathbf{E}^{\mathrm{Zar}}_{n, \veps_n}(\mathbf{u}) := \frac{1}{n^2\veps_n^2} \sum_{\ell=1}^k \sum_{i,j=1}^n W_{ij}^n(u_\ell(x_i) - u_\ell(x_j))^2
$$ 
for all
$\mathbf{u}$ in 
\[
L^2(V; \Sigma_k) := \{\mathbf{u}\colon V \to \RR^k \mid \mathbf{u}(V) \subseteq \Sigma_k  \}. 
\] 
Since $\Omega = U$, we have that $L^2(V; \Sigma_k) = L^2_U(V_n; \Sigma_k)$, as defined in \eqref{e:L2U}. We have defined $\mathbf{E}^{\mathrm{Zar}}_{n, \veps_n}(\mathbf{u})$ and $L^2(V; \Sigma_k)$ with a new name to emphasize the difference in boundary conditions, in analogy with the continuum Dirichlet energy and function space defined in Section \ref{s:ZarePartS}. As for the Dirichlet partition setup, the discrete and continuum Dirichlet energies, $\mathbf{E}^{\mathrm{Zar}}_{n, \veps_n}$ and $\mathbf{E}^{\mathrm{Zar}}$, are further extended to the rest of $TL^2(U;\Sigma_k)$ by $\infty$. 

With this setup, Theorem~\ref{thm:GconvZaremba} can be proven using $\Gamma$-convergence arguments componentwise as in Theorem~\ref{thm:GconvDE}. Here, Lemma~\ref{lem:restrict} is no longer needed for establishing the limsup inequality, as there are no longer any explicit pointwise constraints on the vertex functions. As before, Theorem~\ref{thm:GconvZaremba} implies the $\TL^2$-convergence (up to a subsequence) of discrete minimizers $\mathbf{u}_n$ to a continuum minimizer of $\mathbf{u}$. If there exists a continuous representative of the Zaremba ground state, $\mathbf{u}$, the proof of Corollary~\ref{cor:Hconv} would imply the Hausdorff convergence of the discrete Dirichlet partitions (without the auxiliary domain $\Omega$) to the Zaremba partitions (consisting of open sets). 

\section{Computational Examples} \label{sec:CompEx} 
\subsection{An example illustrating consistency}
In this section, we briefly describe the numerical methods used to generate Figures~\ref{fig:cartoon} and
\ref{fig:conv}. We also provide an example to illustrate the difference between
(continuum) Dirichlet and Zaremba partitions.

It has been conjectured (though, to our knowledge, no proof exists) that the
minimal 3-partitions of a disk are rotations of the ``Mercedes star''
\cite{Helffer2010b}. If this assumption holds true, then it is natural to
assume that the Dirichlet 3-partition for the domain in
Figure~\ref{fig:cartoon}(top left),
$$
U = \{ (r,\theta) \colon r< R(\theta) := 1 + 0.3 \cos(3 \theta) \}
$$
is also the ``Mercedes star,'' as illustrated in the top right panel of
Figure~\ref{fig:cartoon}.  The principal Laplace-Dirichlet eigenfunction for
each partition component is plotted. These were  computed using a boundary
integral method  implemented in the Matlab package, \verb+mpspack+
\cite{mpspack}.

We uniformly sampled $n=1584$ points from $\Omega = [-1.5,1.5]^2 \supseteq U$ and constructed 
a weighted geometric graph using the similarity kernel with radial profile $\bm{\eta}(x) = \exp(-x)$ and the admissible sequence $\epsilon_n = n^{-0.3}$. The graph is illustrated in the lower left panel of Figure~\ref{fig:cartoon}. For plotting purposes, we only plot edges with a weight above a fixed threshold.  Finally, we use the
rearrangement algorithm described in \cite{osting2013minimal} to partition the
graph. The partition obtained with the smallest energy is shown in Figure~\ref{fig:cartoon}(lower right). 

To illustrate the consistency statements (Theorem~\ref{thm:GconvDE} and Corollary~\ref{cor:Hconv}),  we repeated this computation for $n=800$, 1600,  and 3200 and plotted the results in Figure~\ref{fig:conv}. The points  sampled from $\Omega \setminus U$ are not displayed and the partition components are colored arbitrarily (so that they do not necessarily agree in different panels of the figure). As $n$ increases, the partition appears to converge to the ``Mercedes star'' partition illustrated in Figure~\ref{fig:cartoon}(top right).

\subsection{A computational comparison between Dirichlet and Zaremba partitions} \label{s:PartDiff}

\begin{figure}[t]
\begin{center}
\includegraphics[width=.32\textwidth]{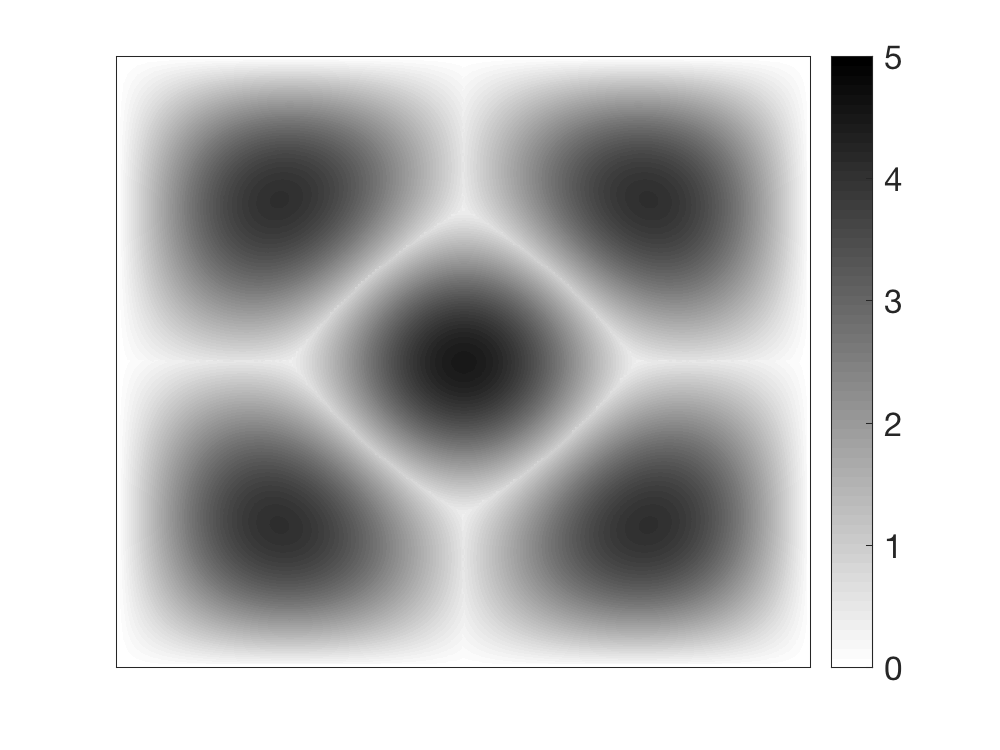}
\includegraphics[width=.32\textwidth]{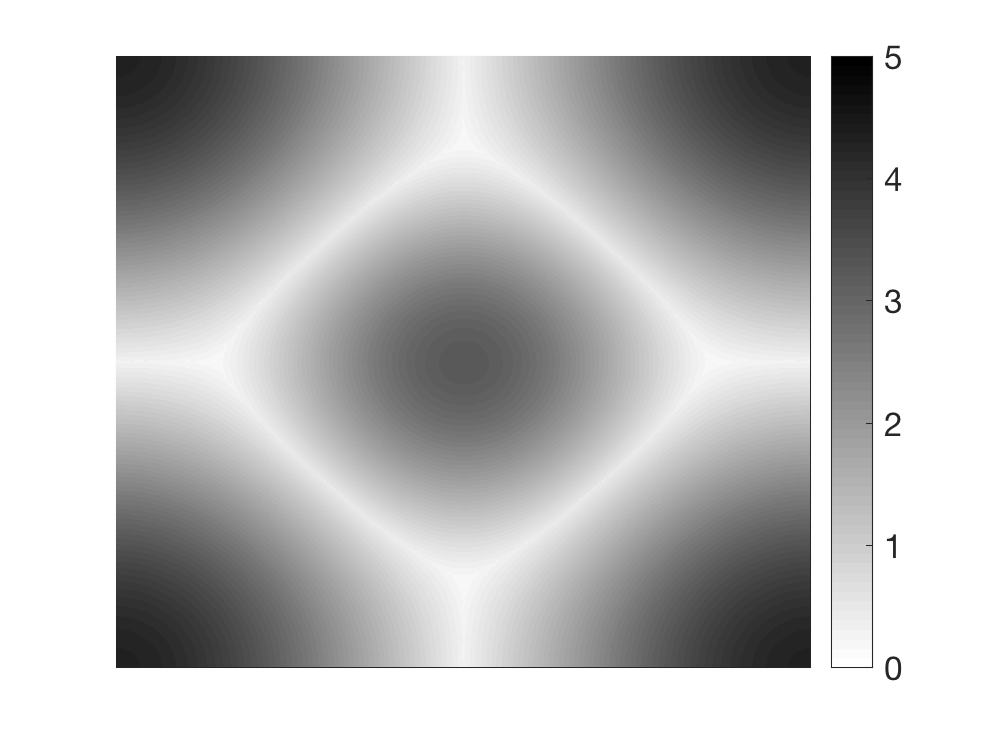}
\includegraphics[width=.32\textwidth]{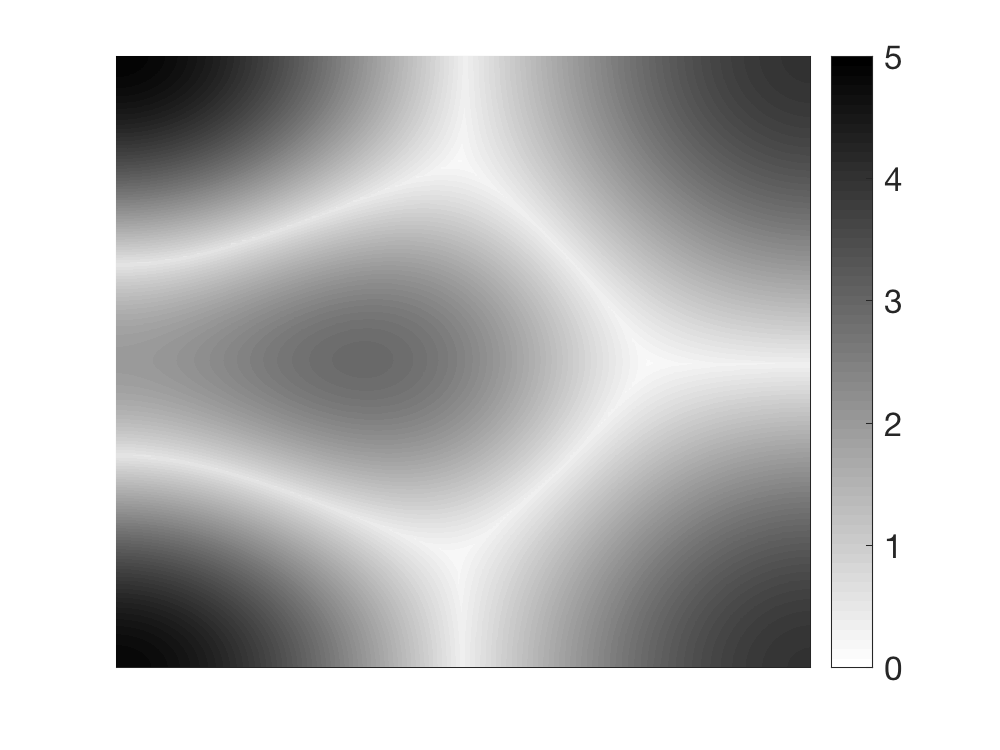} \\
\caption{For a unit square and $k=5$, a comparison of {\bf (left)} a  $k$-Dirichlet partition  and {\bf (center and right)} two locally optimal  $k$-Zaremba partitions. In each component, the first eigenfunction of the Laplacian is plotted with appropriate boundary conditions specified. See Section \ref{s:PartDiff} for a discussion.}
\label{f:ParComp}  
\end{center}
\end{figure}

We consider the problem of approximating Dirichlet partitions
\eqref{eq:ContDirPart} and Zaremba partitions \eqref{eq:SumZaremba} for a unit
square domain, $U = [0,1]^2$. We use the standard 5-point finite difference
approximation of the Laplacian on a $200 \times 200$ square grid with
appropriate boundary conditions. To partition the graph, we use the
rearrangement method described in \cite{osting2013minimal}. The $k$ ground
state components associated with the lowest energy partitions obtained are
plotted in Figure \ref{f:ParComp}. The partition components can be easily
inferred from the supports of the ground state components.  Two local minimum
are found for the Zaremba partitioning problem with similar energies
($\sum_\ell \kappa_1$ is 144.6 for Figure \ref{f:ParComp}(center) and 147.7 for
Figure \ref{f:ParComp}(right)). As for the one-dimensional example in Section
\ref{s:Interval}, we observe that components which intersect the boundary of
$U$ are generally smaller for the Zaremba partition as compared to the
Dirichlet partition. In this and other numerical experiments performed, we
observe that Zaremba partitions generally have more components which intersect
the boundary than Dirichlet partitions. More examples can be found in
\cite{Zosso2015}.

\section{Discussion and Further Directions} \label{sec:discuss} In this paper,
we have proven the consistency statement that the discrete Dirichlet energies
of geometric graphs $\Gamma$-converge to a weighted continuum Dirichlet energy
and, in the case that $\nu$ is the uniform distribution, that the Dirichlet
$k$-partitions of geometric graphs converge to Dirichlet $k$-partitions of the
sampled space in the Hausdorff sense. Our strategy relied on a mapping problem
reformulation due to Caffarelli and Lin \cite{caffarelli2007optimal} for both
the discrete and continuum partitioning problems. We extended results of
Garc\'ia Trillos and Slep\v{c}ev \cite{garcia2016spectral} to show the
$\Gamma$-convergence of the discrete to weighted continuum Dirichlet energies
with respect to the $TL^2$-metric. This, along with a compactness property,
implies the convergence of the ground states. The convergence of the
  ground states, together with the positivity of the ground states on partition
  components, was used to show the Hausdorff convergence of partitions when
  $\nu$ is the uniform distribution. Finally, we also defined a new continuum
partitioning scheme, the Zaremba partition problem, that describes the limiting
behavior (as $n\to\infty$) of the discrete Dirichlet partition problem without
the auxiliary domain $\Omega$ and we proved analogous $\Gamma$-convergence
results.

  In Sections \ref{s:Interval} and \ref{s:PartDiff}, we performed a preliminary
  comparison of Dirichlet and Zaremba partitions, which use two different
  approaches to modeling the boundary of the Euclidean set $U$, from which the
  points are sampled. On one hand, (continuum) Dirichlet partitions seem more
  natural as they equipartition the one-dimensional interval  and appear to
  more closely resemble equipartitions in higher dimensions than Zaremba
  partitions. However, the introduction of an auxiliary domain
  $\Omega \supseteq U$ may not be natural or even possible in all application
  settings. The differences between these partitioning models deserve
  additional attention to specific applications. Perhaps a more natural model
  yet for this consistency result is a closed manifold, where there is no
  boundary.

  An obvious further direction for our theoretical results would be to
  generalize Corollary~\ref{cor:Hconv} to weights $\rho$ other than
  $\rho \equiv |\Omega|^{-1}$ and to Zaremba partitions. Doing so amounts to
  showing that the minimizers $\mathbf{u}$ of $\mathbf{E}$ and
  $\mathbf{E}^{\mathrm{Zar}}$ admit continuous representatives, as was done for
  the original mapping problem in \cite{caffarelli2007optimal}. Perhaps one
  could then also extend the regularity results of optimal partitions to these
  generalized settings. We remark that this discussion also applies to the case
  of the symmetric normalized Dirichlet energies; see Remark~\ref{rmk:normalized}.

Another direction for future theoretical work is to prove similar
results for related partitioning schemes. In this paper, we focused on the
$\ell^1$-norm of the vector of eigenvalues
$(\lambda_1(U_\ell))_{1 \leq \ell \leq k}$, but the $\ell^p$-norm for
$1 \leq p \leq \infty$ is also considered in shape optimization literature,
\eg, $p = \infty$ gives the problem of minimizing $\max_\ell \lambda(U_\ell)$
over partitions $\amalg_{\ell\in[k] U_\ell}$
\cite{helffer2010remarks}. Likewise, since the graph $p$-Laplacian has been
applied to machine learning \cite{luo2010plaplacian}, one could partition using
the eigenvalues of the graph $p$-Laplacian for $1 \leq p < \infty$. Our results
should extend to this clustering scheme if one could prove $\Gamma$-convergence
results analogous to those of \cite{garcia2016spectral}, \eg, that the discrete
$p$-Dirichlet energy
$$u \mapsto \sum_{i,j}^nW_{ij}|u(x_i)-u(x_j)|^p$$ $\Gamma$-converges to the
continuum $p$-Dirichlet energy
$$u \mapsto \int_U |\nabla u|^p\ dx$$ in the $TL^p$-sense. In particular, when
$p > d$, the continuity of continuous minimizers would immediately follow from
the Sobolev embedding theorems and so an analogue of Corollary~\ref{cor:Hconv}
would also hold. Another, less overtly similar clustering scheme is to take a
nonnegative matrix factorization (NMF) of a matrix associated to a graph. In
\cite[Proposition 2.1]{osting2013minimal} it was shown that the Dirichlet
partition problem using the eigenvalues of the random walk factorization
$D^{-1}W$ is equivalent to an NMF of the matrix $D^{-1/2}WD^{-1/2}$, so a
consistency statement for this NMF problem could be obtained from one involving
the random-walk Laplacian.

Finally, in applications that demand graph partitions of very large datasets, it is common to subsample the edges and/or
vertices of a graph, which is sometimes referred to as graph sparsification or the identification of a coreset. The consistency results in the paper supports this practice, but quantifying the error incurred would require a convergence rate of the Dirichlet partitions, a problem we view as difficult.

\appendix
\section{Lemma on restricting sequences}\label{app:lemma}

The following lemma, due to Dejan Slep\v{c}ev,  shows that the convergence of a sequence $(\mu_n, f_n)
\to  (\mu,f)$ in $\TL^p(\RR^d)$ is preserved upon
restricting the functions to a subset $\Omega \subseteq \RR^d$. 
We use the analogue for $\TL^p(\RR^d;\RR^k)$, stated as Lemma~\ref{lem:VectorDejan}, for which the proof follows mutatis mutandis.

\begin{lem}\label{lem:restrict}
  Let $p \geq 1$ and  $\Omega \subseteq \RR^d$ be such that $\mu(\partial
  \Omega)=0$. Assume $\mu_n$ and $\mu$ are probability measures on $\RR^d$
  and $(\mu_n, f_n) \overset{\TL^p}{\longrightarrow} (\mu,f)$ as $n \to
  \infty$. Then $(\mu_n, f_n \chi_\Omega) \overset{TL^p}{\longrightarrow}
  (\mu,f \chi_\Omega)$ as $n \to \infty$. 
\end{lem}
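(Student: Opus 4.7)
The plan is to re-use the transport plans that witness the hypothesized convergence $(\mu_n, f_n) \overset{\TL^p}{\longrightarrow} (\mu, f)$ and show they also couple the restricted pair $(\mu_n, f_n\chi_\Omega)$ to $(\mu, f\chi_\Omega)$ at small $\TL^p$-cost. Select $\pi_n \in \Gamma(\mu,\mu_n)$ with
$$
c_n := \iint_{\RR^d \times \RR^d} \bigl( |x-y|^p + |f(x) - f_n(y)|^p \bigr)\, d\pi_n(x,y) \longrightarrow 0.
$$
Since the $|x-y|^p$ part of the cost is unaffected by inserting $\chi_\Omega$, it suffices to prove
$$
J_n := \iint \bigl| f(x)\chi_\Omega(x) - f_n(y)\chi_\Omega(y) \bigr|^p\, d\pi_n(x,y) \longrightarrow 0.
$$

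First I would carry out an elementary decomposition of the integrand. Writing
$$
f(x)\chi_\Omega(x) - f_n(y)\chi_\Omega(y) = \bigl(f(x) - f_n(y)\bigr)\chi_\Omega(y) + f(x)\bigl(\chi_\Omega(x) - \chi_\Omega(y)\bigr)
$$
and using that $|\chi_\Omega(x) - \chi_\Omega(y)|$ takes only the values $0$ and $1$, a convexity estimate yields
$$
J_n \leq 2^{p-1} c_n + 2^{p-1} I_n, \qquad I_n := \iint |f(x)|^p\, |\chi_\Omega(x) - \chi_\Omega(y)|\, d\pi_n(x,y).
$$
Since $c_n \to 0$ by hypothesis, everything reduces to showing $I_n \to 0$.

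Next I would exploit the assumption $\mu(\partial \Omega) = 0$ via a length-scale decomposition at $\partial \Omega$. Let $A_\delta := \{ z \in \RR^d : d(z, \partial \Omega) < \delta \}$. If $\chi_\Omega(x) \neq \chi_\Omega(y)$ and $|x-y| < \delta$, then $x \in A_\delta$; hence, since $\pi_n$ has first marginal $\mu$,
$$
I_n \leq \int_{A_\delta} |f|^p\, d\mu + \iint_{\{|x-y| \geq \delta\}} |f(x)|^p\, d\pi_n(x,y).
$$
The first term is $n$-independent and tends to $0$ as $\delta \to 0^+$ by dominated convergence, because $\bigcap_{\delta > 0} A_\delta = \partial \Omega$ has $\mu$-measure zero and $|f|^p \in L^1(\mu)$. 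For the ``far'' integral, I would truncate: replacing $|f|^p$ by $\min(|f|^p, M)$ and invoking Chebyshev's inequality $\pi_n(\{|x-y| \geq \delta\}) \leq \delta^{-p} c_n$,
$$
\iint_{\{|x-y| \geq \delta\}} |f(x)|^p\, d\pi_n \leq M \delta^{-p} c_n + \int_{\{|f|^p > M\}} |f|^p\, d\mu.
$$
Given $\varepsilon > 0$, first choose $\delta > 0$ small and $M > 0$ large so that the two $n$-independent terms are each $< \varepsilon/3$, and then take $n$ large enough to make $M \delta^{-p} c_n < \varepsilon/3$. This yields $\limsup_{n \to \infty} I_n \leq \varepsilon$ for arbitrary $\varepsilon > 0$.

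The main obstacle is the potential unboundedness of $|f|^p$: transport-cost estimates provide only that $\pi_n(\{|x-y| \geq \delta\}) \to 0$, and without some uniform integrability one cannot pass from mass control to integral control. The truncation step is what bridges the $L^1(\mu)$-integrability of $|f|^p$ with the transport-cost decay, and the correct sequencing of the three parameters $\delta, M, n$ is the only subtle aspect of the argument; everything else is elementary.
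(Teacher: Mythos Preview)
Your argument is correct and follows the same overall strategy as the paper: re-use near-optimal couplings $\pi_n$, exploit $\mu(\partial\Omega)=0$ via a $\delta$-neighborhood of the boundary, control the ``far'' region $\{|x-y|\geq\delta\}$ by Chebyshev, and handle the potential unboundedness of $|f|^p$ by truncation at a level $M$ (uniform integrability). The one genuine difference is your algebraic decomposition
\[
f(x)\chi_\Omega(x) - f_n(y)\chi_\Omega(y) = \bigl(f(x)-f_n(y)\bigr)\chi_\Omega(y) + f(x)\bigl(\chi_\Omega(x)-\chi_\Omega(y)\bigr),
\]
which places the fixed function $f$, rather than the moving functions $f_n$, in the ``mismatch'' term. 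The paper instead splits the domain into $\Omega\times(\RR^d\setminus\Omega)$ and $(\RR^d\setminus\Omega)\times\Omega$, producing both an $\int|f(y)|^p\,d\pi_n$ term and an $\int|f_n(x)|^p\,d\pi_n$ term; it then needs the extra step $|f_n(x)|\leq |f_n(x)-f(y)|+|f(y)|$ to reduce the latter to quantities already controlled. Your decomposition sidesteps this entirely, so the proof is one step shorter; otherwise the two arguments are the same.
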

\begin{proof}
  Let $\veps>0$.  There exists $n_1$ such that for all $n \geq n_1$, $$
  d_{\TL^p}((\mu_n,f_n), (\mu,f))^p < \frac{\veps}{4 \cdot 2^p}.$$  Since
  \[\lim_{M \to \infty} \int_{\{x \colon  |f(x)|>M\}} |f(z)|^p\ d\mu(z) = 0,\]
  there exists $\alpha>0$ such that for all measures $\sigma$, with $0\leq
  \sigma \leq \mu$ and $\sigma(\RR^d)\leq \alpha$ it holds that
  \begin{equation} \label{est_sig}
    \int_{\RR^d} |f(z)|^p\ d\sigma(z) < \frac{\veps}{8 \cdot 2^p}.
  \end{equation}
  Given a set $A \subseteq \RR^d$ and $\delta>0$, let $A_\delta$ be the thickened
  set $A_\delta= \{ x \in \RR^d \colon  d(x, A)< \delta\}$. Since
  $\partial \Omega = \bigcap_{\delta>0} (\partial \Omega)_\delta$, it follows
  that $$\lim_{\delta \to 0} \mu((\partial \Omega)_\delta) = \mu(\partial
  \Omega) = 0.$$ Therefore there exists $\delta>0$ such that $\mu((\partial
  \Omega)_\delta) < \alpha$.

For any $n$, let $\pi_n \in \Pi(\mu_n, \mu)$ be a transportation plan
  such that
  \[ \int_{\RR^d \times \RR^d} |f_n(x) - f(y)|^p + |x-y|^p\ d\pi_n(x,y) < 2 \,
  d_{TL^p}((\mu_n,f_n), (\mu,f))^p.  \] Since $(\mu_n, f_n)
  \overset{TL^p}{\longrightarrow} (\mu,f)$, there exists $n_2$ such that for all
  $n \geq n_2$
  \begin{equation} \label{est_pi} \pi_n(\{(x,y) \colon  |x-y|> \delta \}) <
    \alpha.
  \end{equation}
 For all $n \geq \max\{ n_1, n_2\}$
  \begin{align*}
    d_{TL^p}((\mu_n,f_n \chi_\Omega), (\mu,f \chi_\Omega))^p \leq \,&
    \int_{\RR^d \times \RR^d} |f_n(x) - f(y)|^p + |x-y|^p\ d\pi_n(x,y) \\
    &+ \int_{\Omega \times (\RR^d \backslash \Omega)} |f_n(x)|^p\ d\pi_n(x,y)
    + \int_{(\RR^d \backslash \Omega) \times \Omega}  |f(y)|^p\ d\pi_n(x,y) \\
    \leq \,  & 2  d_{TL^p}((\mu_n,f_n), (\mu,f))^p \\
    & + \int_{\{(x,y) \colon  |x-y| \geq \delta\} \cup (\partial \Omega)_\delta}
    |f_n(x)|^p + |f(y)|^p \ d\pi_n(x,y).
  \end{align*}
  Let ${E^\delta} := {\{(x,y) \::\: |x-y| \geq \delta\} \cup (\partial
    \Omega)_\delta} $.  By our choice of $\delta$, using \eqref{est_sig} and
  \eqref{est_pi}, it follows that $$\int_{E^\delta} |f(y)|^p \ d\pi_n(x,y) <
  \frac{\veps}{4 \cdot 2^p}.$$  To estimate the integral of $f_n$, note that
  \begin{align*}
    \int_{E^\delta}  |f_n(x)|^p \ d\pi_n(x,y) 
    & \leq \int_{E^\delta}  (|f_n(x) - f(y) | + |f(y)|)^p\ d\pi_n(x,y)  \\
    & \leq \,    \int_{E^\delta}   2^p (|f_n(x)|- f(y) |^p + |f(y)|^p )\ d\pi_n(x,y)  \\
    & \leq \,  2^{p+1} d_{TL^p}((\mu_n,f_n), (\mu,f))^p + \frac{\veps}{4 \cdot
    }.
  \end{align*}
  Combining the estimates gives 
  \[ 
  d_{TL^p}((\mu_n,f_n \chi_\Omega), (\mu,f \chi_\Omega)) < \, 2^{p+2}
  d_{TL^p}((\mu_n,f_n), (\mu,f))^p + \frac{\veps}{4 \cdot 2^p}+ \frac{\veps}{4
    \cdot } < \veps.  \]
\end{proof}

\printbibliography
\end{document}